\newtheorem{theorem}{Theorem}[section]
\newtheorem{proposition}[theorem]{Proposition}
\newtheorem{assumption}[theorem]{Assumption}
\theoremstyle{remark}
\newtheorem{remark}[theorem]{Remark}
\numberwithin{equation}{section}
\newcommand \id{\mathds 1}
\newcommand {\R} {\mathbb{R}}
\DeclareMathOperator*{\argmax}{arg\,max}
\begin{document}
\title[Boundedness of the nodal domains of additive Gaussian fields]{Boundedness of the nodal domains of \\ additive Gaussian fields}
\author{Stephen Muirhead}
\email{smui@unimelb.edu.au}
\address{School of Mathematics and Statistics, University of Melbourne}
\begin{abstract}
We study the connectivity of the excursion sets of additive Gaussian fields, i.e.\ stationary centred Gaussian fields whose covariance function decomposes into a sum of terms that depend separately on the coordinates. Our main result is that, under mild smoothness and correlation decay assumptions, the excursion sets $\{f \le \ell\}$ of additive planar Gaussian fields are bounded almost surely at the critical level $\ell_c = 0$. Since we do not assume positive correlations, this provides the first examples of continuous non-positively-correlated stationary planar Gaussian fields for which the boundedness of the nodal domains has been confirmed. By contrast, in dimension $d \ge 3$ the excursion sets have unbounded components at all levels.
\end{abstract}
\date{\today}
\thanks{}
\keywords{Gaussian fields, level sets, nodal domains, percolation}
\subjclass[2010]{60G60 (primary); 60F99 (secondary)} 
\thanks{Supported by the Australian Research Council (ARC) Discovery Early Career Researcher Award DE200101467.}

\maketitle

\textit{This is a minor correction to the published version of this article (Theor.\ Probab.\ Math.\ Stat.\ 106, 143–155, 2022). The only difference is that the display
 \[  \mathbb{P}(I  \in B_i^+ )^{\textrm{tahn}(s/2)}   \mathbb{E} \big[  e^{2\theta S }    \big]    \]
   on the page 12 of this version has been corrected to 
   \[    \mathbb{P}(I  \in B_i^+ )^{\textrm{tahn}(s/2)}   \mathbb{E} \big[  e^{2\theta S } \id_{I  \in B_i^+  }   \big]   .  \]}
   
\section{Introduction}

Let $f$ be a continuous centred stationary Gaussian field on $\mathbb{R}^2$. The question of whether the excursion sets $\{f \le \ell\}$, $\ell \in \mathbb{R}$, contain an unbounded connected component has been of interest to mathematicians and physicists for many decades \cite{dyk70,is92,zs71}. Molchanov and Stepanov \cite{ms83a,ms83b} gave general conditions under which the percolation threshold is non-trivial, i.e.\ there exists an $\ell_c \in \mathbb{R}$ such that:
\begin{itemize}
\item If $\ell < \ell_c$, almost surely all the connected components of $\{f \le \ell\}$ are bounded;
\item If $\ell > \ell_c$, almost surely $\{f \le \ell\}$ has an unbounded connected component.
\end{itemize}
More recently it has been shown that $\ell_c = 0$ under very mild conditions (\cite{mrv20}, and see also \cite{rv20,mv20,riv21,gv21} for quantitative versions of this result under stronger conditions). The question of the absence of percolation at criticality (i.e.\ whether the nodal domains $\{f \le 0\}$, or equivalently the nodal set $\{f = 0\}$, have only bounded connected components) appears to be more challenging, and thus far has only been established for positively-correlated fields~\cite{al96} (for Gaussian fields on the lattice $\mathbb{Z}^2$, it has also been shown for a \textit{perturbative} class of non-positively-correlated fields~\cite{bg17b}). Recall that the analogous question is still open in general for Bernoulli percolation in dimension $d \ge 3$, although the planar case was settled long ago~\cite{ha60}.

\smallskip
In this paper we consider this question for the class of \textit{additive Gaussian fields}. These are centred stationary Gaussian fields $f$ whose covariance kernel $\kappa(x) = \mathbb{E}[f(0) f(x)]$ can be written as
\[ \kappa(x) = \kappa(x_1, x_2) = K_1(x_1) + K_2(x_2)  \]
where $x = (x_1, x_2) \in \mathbb{R}^2$. A simplifying feature of these fields is that, equivalently, they are Gaussian fields which can be decomposed as
\begin{equation}
\label{e:g}
 f(x) = f(x_1, x_2) =  g_1(x_1) + g_2(x_2) , 
 \end{equation}
where $g_i$ are independent centred stationary Gaussian processes with respective covariance kernel $K_i$. See \cite{dgr12} for a discussion of additive Gaussian fields in the context of statistical modelling, and Figure \ref{f:additive} for an illustration of their nodal domains.

\subsection{Boundedness of the nodal domains}
We shall make the following assumptions:

\begin{assumption}
\label{a}
For each $i = 1,2$, $g_i$ is not identically zero and satisfies: 
\begin{enumerate}
\item (Smoothness) $g_i$ is almost surely $C^2$-smooth;
\item (Decay of correlations) As $|x| \to \infty$, $K_i(x)  \log  |x| \to 0$.
\end{enumerate}
\end{assumption}

We use the $C^2$-smoothness and the `Breman condition' $K_i(x)  \log  |x| \to 0$ in order to access extreme value theory for smooth Gaussian processes (see, e.g., \cite{llr83} as well as Propositions \ref{p:scaling}--\ref{p:ppc} below), although we believe that the results should be true under weaker assumptions. 

\smallskip
Our main result is that, under the above assumptions, the critical level is $\ell_c = 0$ and the nodal domains are bounded:

\begin{theorem}
\label{t:main}
Under the above assumptions:
\begin{itemize}
\item If $\ell \le 0$, almost surely all the connected components of $\{f \le \ell\}$ are bounded;
\item If $\ell > 0$, almost surely $\{f \le \ell\}$ has a unique unbounded connected component.
\end{itemize}
In particular, for all $\ell \in \mathbb{R}$ the level set $\{f = \ell\}$ has bounded components almost surely.
\end{theorem}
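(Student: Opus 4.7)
The plan is to reduce the theorem to a single claim $(\star)$: almost surely every component of $\{f\le 0\}$ is bounded. The case $\ell<0$ follows from $\{f\le\ell\}\subseteq\{f\le 0\}$. For $\ell>0$, the symmetry $f\stackrel{d}{=}-f$ gives
\[ \{f>\ell\} \stackrel{d}{=} \{f<-\ell\} \subseteq \{f\le 0\}, \]
so $\{f>\ell\}$ has only bounded components a.s. Combined with the existence of an unbounded component of $\{f\le\ell\}$ (afforded by $\ell_c=0$ from \cite{mrv20} under our assumptions) and standard planar topology --- two disjoint unbounded closed components of $\{f\le\ell\}$ would require an unbounded separator in $\{f>\ell\}$, which we have ruled out --- this yields a unique unbounded component of $\{f\le\ell\}$. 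The level-set statement follows since every component of $\{f=\ell\}$ sits inside a component of either $\{f\le\ell\}$ (for $\ell\le 0$) or of $\{f\ge\ell\}\stackrel{d}{=}\{f\le -\ell\}$ (for $\ell>0$), both of which have only bounded components by the first two parts.

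For $(\star)$, I would first observe that any unbounded component $C$ of $\{f\le 0\}$ must have \emph{both} axis projections unbounded. Indeed, if the $x_2$-projection of $C$ were contained in a compact interval $[\alpha,\beta]$, then $g_2$ would be bounded below on $C$ by $m_0 := \min_{[\alpha,\beta]} g_2$, so $g_1(x_1)\le -m_0$ throughout $C$; since $g_1$ is a nontrivial stationary Gaussian process, $\{g_1\le -m_0\}$ has only bounded components, and the connectedness of $C$ forces its $x_1$-projection to lie in one such bounded component --- contradicting unboundedness. Given that both projections are unbounded, my approach is to use the Poisson convergence of high exceedances (Propositions \ref{p:scaling}--\ref{p:ppc}) to describe the sparse horizontal slices of $\{f\le 0\}$ at heights $x_2$ where $g_2$ takes a high peak: there the slice $\{g_1\le -g_2(x_2)\}$ consists only of short intervals near deep minima of $g_1$. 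An unbounded $C$ must thread all such sparse slices coherently, and the independence of the peak positions of $g_2$ from the minimum positions of $g_1$ should make such threading probabilistically impossible.

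The technical heart of the argument, and the main obstacle, is turning this heuristic into a rigorous no-percolation estimate. I anticipate fixing a large scale $L$ and a matched threshold $M_L\sim\sqrt{2\log L}$, conditioning on the joint Poisson-limit structure of the peaks of $g_2$ above $M_L$ in $[-L,L]$ and the deep minima of $g_1$ below $-M_L$, and estimating the probability that a connected path in $\{f\le 0\}$ can cross $[-L,L]^2$. The independence of $g_1$ and $g_2$ should be used decisively to randomise the gate positions against the peak-height constraints, forcing the crossing probability to be bounded away from $1$ uniformly in $L$. A standard Borel-Cantelli argument, applied to disjoint annular shells and combined with the near-independence afforded by the Berman condition $K_i(x)\log|x|\to 0$, then upgrades the single-scale estimate to the almost-sure absence of unbounded components, establishing $(\star)$.
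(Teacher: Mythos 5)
There are two genuine gaps here, one small and one fatal.

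First, the small one: you appeal to $\ell_c=0$ from \cite{mrv20} to supply an unbounded component of $\{f\le\ell\}$ for $\ell>0$. But Remark~\ref{r:degen} of the paper points out explicitly that additive Gaussian fields \emph{fail} the hypotheses of \cite{mrv20}: the covariance $\kappa(x_1,0)=K_1(x_1)+K_2(0)\to K_2(0)>0$ does not decay along the axes, so the long-range-decay assumptions underlying that result are violated. The fact that $\ell_c=0$ is itself new for these fields and must be proved directly; you cannot import it.

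Second, and more seriously, you never actually prove $(\star)$ --- the entire third paragraph is a program, not an argument. What you describe (``an unbounded $C$ must thread all such sparse slices coherently, and the independence\ldots should make such threading probabilistically impossible'') would require controlling all possible paths in $\{f\le 0\}$, which is exactly the kind of thing that makes no-percolation-at-criticality hard in general. The paper sidesteps this entirely by constructing an explicit geometric obstacle: a \emph{blocking rectangle} $R=[a_1,b_1]\times[a_2,b_2]$ chosen so that $f>0$ on all of $\partial R$. The key observation is that if $a_1,b_1$ are positions where $g_1$ takes high peaks (above $s$) on $[-2T,-T]$ and $[T,2T]$, if $a_2,b_2$ are analogous peaks of $g_2$, and if $\inf_{[-2T,2T]}g_1>-s$ and $\inf_{[-2\tau(T),2\tau(T)]}g_2>-s$, then $g_1(a_1)+g_2(x_2)>0$ for all $x_2\in[a_2,b_2]$ and similarly on the other three sides. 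Proposition~\ref{p:supinf} gives this event probability bounded away from zero uniformly in $T$ (for the tuned choice $s=L_{1;T}=L_{2;\tau(T)}$). The upgrade from positive probability to almost-sure then uses \emph{ergodicity} of $(g_1,g_2)$ under the diagonal shift (Maruyama's theorem plus joint ergodicity from independence), applied to the shift-invariant event ``every compact set is contained in a blocking rectangle.'' This is both simpler and more robust than the Borel--Cantelli-over-annular-shells scheme you sketch, which would require decorrelation estimates between shells that you have not established (and which the Berman condition alone does not obviously give at the required quantitative strength). Without the blocking-rectangle idea, or an alternative rigorous replacement for your ``threading'' heuristic, the proof is incomplete.

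Your preliminary observation that an unbounded component must have both coordinate projections unbounded is correct, but it is not used in the paper's argument and does not by itself get you close to $(\star)$.
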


\begin{remark}
In Theorem \ref{t:main} we do \textit{not} assume that $f$ is positively-correlated, and so this provides the first examples of smooth non-positively-correlated Gaussian fields for which the boundedness of the nodal domains has been established rigorously.
\end{remark}

\begin{remark}
\label{r:degen}
While the main novelty of Theorem \ref{t:main} is the boundedness of the nodal domains, even the fact that $\ell_c = 0$ is new for these fields. This is since additive Gaussian fields have long-range correlations, due to the degeneracy in the covariance structure, and so fail to satisfy the assumptions imposed in other works that prove $\ell_c = 0$ (e.g.\ \cite{mrv20}). Indeed, the covariance kernel $\kappa$ does not decay in all directions since $\kappa(x_1,0) = K_1(x_1) + K_2(0) \to K_2(0) > 0$ as $x_1 \to \infty$, although correlations do decay along any ray that is not parallel to the coordinate axes.
\end{remark}

\begin{figure}[h]
\centering
\includegraphics[scale=0.42]{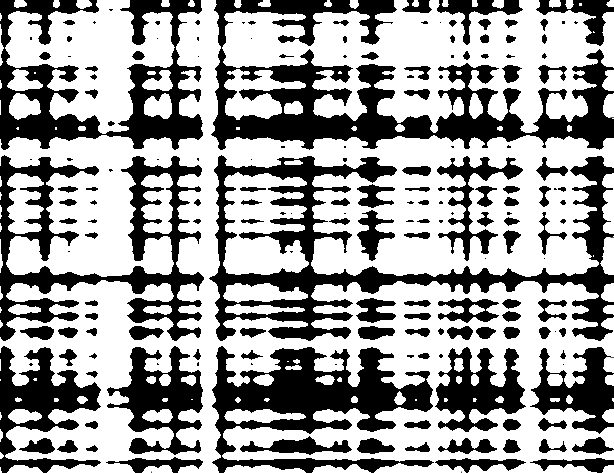}
\hspace{0.2cm}
\includegraphics[scale=0.487]{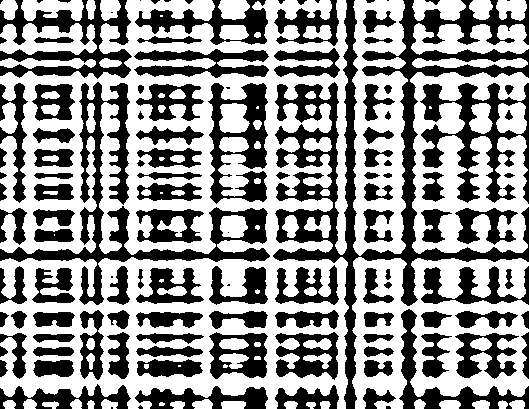}
\caption{The nodal domains $\{f \le 0\}$ (in black) for the additive Gaussian fields with $K_1(x) = K_2(x) = e^{-x^2}$ (left frame) and $K_1(x) = K_2(x) = \cos(x) e^{-x^2}$ (right frame). Theorem~\ref{t:main} states that these sets have bounded connected components almost surely. Note that the first field is positively-correlated while the second field is not.}
\label{f:additive}
\end{figure}

\newpage

In contrast to the planar case, for additive Gaussian fields in dimension $d \ge 3$ the nodal domains $\{f \le 0\}$ \textit{do} contain an unbounded connected component. Indeed if $d \ge 3$ the critical level is $\ell_c = -\infty$, and \textit{every} excursion set $\{f \le \ell\}$ contains an unbounded connected component:

\begin{theorem}
\label{t:3d}
Let $d \ge 3$ and let $f : \mathbb{R}^d \to \mathbb{R}$ be a Gaussian field such that
\[  f(x) = f(x_1, \ldots , x_d) =  g_1(x_1) + \ldots + g_d(x_d) ,   \]
where $g_i$ are independent centred stationary Gaussian processes satisfying Assumption~\ref{a}. Then  $\ell_c = - \infty$, i.e.\ for every $\ell \in \mathbb{R}$, $\{ f \le \ell\}$ has an unbounded connected component almost surely.
\end{theorem}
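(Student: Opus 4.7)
The plan is to reduce the $d$-dimensional problem to the planar case already resolved by Theorem \ref{t:main}, using the extra $d-2 \ge 1$ coordinates as a ``budget'' to push the effective planar level above its critical value. For an arbitrary $\ell \in \R$, I would construct an explicit unbounded connected subset of $\{f \le \ell\}$ of product form.

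First I would check the a.s.\ existence, for any $M > 0$ and any $i$, of a non-empty open interval $I \subset \R$ on which $g_i \le -M$. This follows because $g_i$ is continuous and, by Gaussian extreme-value theory for smooth processes under the Berman condition $K_i(x) \log |x| \to 0$ (see \cite{llr83}), one has $\inf_{x \in \R} g_i(x) = -\infty$ almost surely; the set $\{g_i < -M\}$ is then a non-empty open subset of $\R$, so contains such an interval.

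Next, choose constants $M_1, \dots, M_{d-2} > 0$ large enough that
\[
 M := \ell + M_1 + \cdots + M_{d-2} > 0,
\]
and for each $i \in \{1, \dots, d-2\}$ pick an interval $I_i$ on which $g_i \le -M_i$. The two-coordinate field $\tilde f(y_1, y_2) := g_{d-1}(y_1) + g_d(y_2)$ is itself a planar additive Gaussian field satisfying Assumption \ref{a}, and $M > 0$ is supercritical, so Theorem \ref{t:main} provides an unbounded connected component $U \subset \R^2$ of $\{\tilde f \le M\}$ almost surely. The set
\[
 S := I_1 \times \cdots \times I_{d-2} \times U \subset \R^d
\]
is connected (product of connected sets) and unbounded (since $U$ is), and for every $x \in S$,
\[
 f(x) = \sum_{i=1}^{d-2} g_i(x_i) + \tilde f(x_{d-1}, x_d) \;\le\; -(M_1 + \cdots + M_{d-2}) + M \;=\; \ell,
\]
so $S \subset \{f \le \ell\}$. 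Hence $\{f \le \ell\}$ has an unbounded component a.s.\ for every $\ell \in \R$, i.e.\ $\ell_c = -\infty$.

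There is no substantive obstacle in this argument; the real content lies entirely in Theorem \ref{t:main}, and the additive covariance structure is precisely what permits a deterministic product decomposition of low sub-level sets (fixing a coordinate simply shifts the remaining field by a constant). The two ingredients to verify are both standard: the a.s.\ unboundedness of each $g_i$ from below (which is immediate from the Berman hypothesis already built into Assumption \ref{a}) and the existence of an unbounded component at a supercritical level for the planar additive field (which is the easy direction of Theorem \ref{t:main}).
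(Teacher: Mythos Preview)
Your proposal is correct and follows essentially the same route as the paper: fix the extra $d-2$ coordinates at locations where the corresponding $g_i$ are very negative, and then apply the supercritical direction of Theorem~\ref{t:main} to the remaining planar additive field at a strictly positive level. The only cosmetic differences are that the paper pins single points $s_i$ (restricting to a $2$-plane) rather than intervals $I_i$, and phrases the existence of such points via ergodicity rather than via $\inf g_i = -\infty$.
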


\begin{remark}
The conclusion of Theorem \ref{t:3d} is markedly different to the expected behaviour for generic stationary Gaussian fields in dimension $d \ge 3$, for which it is believed, and in some cases known \cite{drrv21}, that $\ell_c \in (-\infty, 0)$.
 \end{remark}

\begin{remark}
Unlike in the planar case, the proof of Theorem \ref{t:3d} does not settle whether the unbounded component of $\{f \le \ell\}$ is unique, although it is natural to expect this.
\end{remark}

\subsection{The critical phase} Returning to the planar case, we next turn our attention to features of the critical phase. First we ask whether, as for other planar percolation models (see, e.g., \cite{gri99}), `box-crossing estimates' hold at criticality, i.e.\ whether for every $\rho > 0$,
 \begin{equation}
\label{e:rsw}
0 <  \liminf_{R \to \infty} \mathbb{P}(\textrm{Cross}_0(R, \rho R) ) \le  \limsup_{R \to \infty} \mathbb{P}(\textrm{Cross}_0(R, \rho R) ) < 1 ,
\end{equation}
where $\textrm{Cross}_\ell(a,b)$ is the event that there is a left-right path in $\{f \le \ell\} \cap ( [0,a] \times [0,b])$, i.e.\ a path that intersects $\{0\} \times [0, b]$ and $\{a\} \times [0,b]$. 

\begin{theorem}
\label{t:rsw}
The box-crossing estimates \eqref{e:rsw} hold if and only if  $K_1(0) = K_2(0)$. If $K_1(0) \neq K_2(0)$ then instead, for every $\rho > 0$, as $R \to \infty$,
\[ \mathbb{P}(\textrm{Cross}_0(R, \rho R) )  \to  \begin{cases} 1 & \text{if } K_1(0) < K_2(0) ,\\ 0 & \text{if } K_2(0) < K_1(0) .\end{cases} \]
\end{theorem}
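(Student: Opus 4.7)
The plan is to exploit the additive structure $f = g_1 + g_2$ to reduce the left--right crossing event \emph{exactly} to a comparison of the extrema of the two summands, and then to invoke classical Gaussian extreme value theory.

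\emph{Step 1 (geometric reduction).} Set $M_1 := \max_{x_1 \in [0,R]} g_1(x_1)$ and $m_2 := \min_{x_2 \in [0,\rho R]} g_2(x_2)$, attained at points $x_1^\star, x_2^\star$. I claim that almost surely
\[ \mathrm{Cross}_0(R, \rho R) \iff M_1 + m_2 < 0. \]
If $M_1 + m_2 < 0$, then $f(x_1, x_2^\star) = g_1(x_1) + m_2 \le M_1 + m_2 < 0$ for every $x_1 \in [0,R]$, so the horizontal segment $[0,R] \times \{x_2^\star\}$ lies in $\{f \le 0\}$ and realises a crossing. Conversely, if $M_1 + m_2 > 0$, then $f(x_1^\star, \cdot) > 0$ on $[0,\rho R]$, so the vertical segment $\{x_1^\star\} \times [0, \rho R]$ is contained in $\{f > 0\}$ and either separates the left and right sides of the box or is contained in one of them; either way, it blocks any left--right path in $\{f \le 0\}$. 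Since $M_1$ and $-m_2$ are independent with continuous distributions, the boundary event $\{M_1 + m_2 = 0\}$ is null. Hence
\[ \mathbb{P}(\mathrm{Cross}_0(R, \rho R)) = \mathbb{P}\bigl(\max\nolimits_{[0,R]} g_1 < \max\nolimits_{[0,\rho R]} (-g_2)\bigr). \]

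\emph{Step 2 (extreme value asymptotics).} Under Assumption \ref{a}, each of $g_1$ and $-g_2$ is a $C^2$-smooth centred stationary Gaussian process of variance $K_i(0)$ satisfying the decay condition. Classical Gaussian extreme value theory (\cite{llr83}, as recalled in Propositions \ref{p:scaling}--\ref{p:ppc}) yields
\[ \sqrt{\tfrac{2 \log L}{K_i(0)}} \bigl( \max\nolimits_{[0,L]} g_i - b_L^{(i)} \bigr) \xrightarrow{d} G \qquad (L \to \infty), \]
with $G$ a standard Gumbel variable and centering $b_L^{(i)} = \sqrt{2 K_i(0) \log L} + O(\log\log L / \sqrt{\log L})$. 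Applying this to $\max_{[0,R]} g_1$ and $\max_{[0,\rho R]}(-g_2)$ and invoking the independence of $g_1,g_2$ gives the joint limit of the two rescaled maxima.

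\emph{Step 3 (case analysis and main obstacle).} The leading centering is $\sqrt{2 K_i(0) \log R}$; the $\rho$-factor only affects lower-order terms. If $K_1(0) \neq K_2(0)$, the two centerings differ by $(\sqrt{2 K_2(0)} - \sqrt{2 K_1(0)})\sqrt{\log R}$, which swamps the Gumbel fluctuations of order $1/\sqrt{\log R}$; hence $\mathbb{P}(\mathrm{Cross}_0(R,\rho R)) \to 1$ if $K_1(0) < K_2(0)$ and $\to 0$ if $K_1(0) > K_2(0)$. If instead $K_1(0) = K_2(0) =: \sigma^2$, the leading terms cancel and a direct expansion gives $\sqrt{2 \log R / \sigma^2}\,(b^{(2)}_{\rho R} - b^{(1)}_R) \to \log \rho$; the rescaled difference $\sqrt{2 \log R / \sigma^2}(M_1 + m_2)$ therefore converges to $G_1 - G_2 - \log\rho$ for independent standard Gumbels $G_1, G_2$, a shifted logistic law, yielding $\mathbb{P}(\mathrm{Cross}_0(R,\rho R)) \to \rho/(1+\rho) \in (0,1)$. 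The main technical point is precisely this last bookkeeping---tracking when $b^{(2)}_{\rho R} - b^{(1)}_R$ is asymptotically negligible on the Gumbel scale versus when it diverges---which is routine given the cited references; the conceptual heart of the argument is the deterministic equivalence established in Step 1.
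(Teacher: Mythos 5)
Your proposal is essentially correct, and its core observation in Step 1 is sharper than what the paper uses: you establish that $\mathrm{Cross}_0(R,\rho R)$ is, up to a null event, \emph{equivalent} to $\max_{[0,R]}g_1 + \min_{[0,\rho R]}g_2 < 0$, which turns the crossing probability into a single comparison of extrema and in principle yields an exact limit. The paper instead works with one-sided implications: it constructs an event $\mathcal{A}_T$ (both $g_i$ have $\sup > L_T$ and $\inf > -L_T$) whose occurrence forces a blocking path, bounds $\mathbb{P}(\mathcal{A}_T)$ away from $0$ and $1$ via Propositions~\ref{p:scaling}--\ref{p:ppc}, and then invokes the symmetry $f \overset{d}{=} -f$ to get the matching lower bound; for $K_1(0)\neq K_2(0)$ it uses another one-sided event $\mathcal{B}_T^h$ and sends $h\to\infty$. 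Your equivalence subsumes both directions in one stroke. One caveat: your claimed limit $\rho/(1+\rho)$ in the balanced case is not quite right in general, because the Gumbel limits for $g_1$ and $-g_2$ carry constant shifts $\log(\sqrt{\lambda_{2;i}}/\sigma) - \log(2\pi)$ depending on the spectral second moments $\lambda_{2;i}=-K_i''(0)$, which need not be equal; the correct limit is $\rho'/(1+\rho')$ with $\rho' = \rho\sqrt{\lambda_{2;2}/\lambda_{2;1}}$. This does not affect the theorem, since all that is needed is that the limit lies strictly in $(0,1)$, but it should be stated correctly. (A second, cosmetic imprecision: for a $C^2$-smooth stationary Gaussian process the centering correction is $O(1/\sqrt{\log L})$, not $O(\log\log L/\sqrt{\log L})$; the latter is the iid discrete rate.) Modulo these small adjustments, Steps 2--3 track the same extreme-value input as the paper and the case analysis $K_1(0)\lessgtr K_2(0)$ is handled identically, via the $\sqrt{\log R}$-order mismatch in the centerings overwhelming the $1/\sqrt{\log R}$-order Gumbel fluctuations.
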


While the behaviour in the case $K_1(0) \neq K_2(0)$ may be surprising when compared with other percolation models, it is best understood as an artefact of the lack of symmetry under rotation by $\pi/2$. Indeed, in general one has the analogue of \eqref{e:rsw} for the events $\textrm{Cross}_0(R, \rho R^{K_1(0)/K_2(0)})$, i.e.\ box-crossing estimates hold after appropriate rescaling.

\smallskip
Finally we consider the size of the `critical window', i.e.\ the neighbourhood around $\ell_c = 0$ for which the connectivity of the excursion sets $\{f \le \ell\}$ behaves `critically' at scale $R$. Restricting to the case $K_1(0) = K_2(0)$, we prove that the size of this window is of order $1/\sqrt{\log R}$.

\begin{theorem}
\label{t:cw}
Suppose $K_1(0) = K_2(0)$ and let $\ell_R \ge 0$. If $\ell_R \sqrt{\log R} \to \infty$ as $R \to \infty$ then
\[   \mathbb{P}( \textrm{Cross}_{\ell_R}(R, R) )  \to 1 , \]
whereas if $\limsup_{R \to \infty} \ell_R \sqrt{\log R} < \infty$ then
\[  \limsup_{R \to \infty}  \mathbb{P}( \textrm{Cross}_{\ell_R}(R, R) )  < 1 . \]
\end{theorem}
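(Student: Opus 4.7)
My plan is to reduce the crossing event to a deterministic condition on the one-dimensional extrema of $g_1$ and $g_2$, and then apply the Gaussian extreme-value theory that is available under Assumption~\ref{a} (Propositions~\ref{p:scaling}--\ref{p:ppc}).

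\textbf{Step 1: a deterministic equivalence.} Writing $M_1^R := \max_{[0,R]} g_1$ and $m_2^R := \min_{[0,R]} g_2$, I first aim to establish the pointwise (i.e.\ deterministic) identity
\[
\textrm{Cross}_{\ell}(R,R) = \{M_1^R + m_2^R \le \ell\}.
\]
For $\supseteq$: if $y^\ast \in [0,R]$ attains $m_2^R$, then the horizontal segment $[0,R] \times \{y^\ast\}$ lies entirely in $\{f \le \ell\}$ and crosses the square left to right. For $\subseteq$: any crossing path, being continuous and staying in $[0,R]^2$, surjects onto $[0,R]$ in its first coordinate by the intermediate value theorem, so for every $x_0 \in [0,R]$ there exists $y_0 \in [0,R]$ with $g_1(x_0) + g_2(y_0) \le \ell$; this forces $g_1(x_0) \le \ell - m_2^R$, and taking the supremum over $x_0$ gives the claim.

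\textbf{Step 2: extreme-value asymptotics.} Under Assumption~\ref{a}, the processes $M_1^R$ and (using that $-g_2$ has the same law as $g_2$) $-m_2^R$ both satisfy a Gumbel limit with centring $b_R^{(i)} = \sqrt{2 K_i(0) \log R} + O(1/\sqrt{\log R})$ and fluctuations of order $1/\sqrt{\log R}$. Because $K_1(0) = K_2(0)$, the \emph{leading-order} centrings coincide, and only the $O(1/\sqrt{\log R})$ correction (depending on the spectral constants $-K_i''(0)$) survives. By independence of $g_1$ and $g_2$, the rescaled variable
\[
W_R := \sqrt{\log R}\,(M_1^R + m_2^R)
\]
therefore converges in distribution to $\alpha + \beta(G_1 - G_2)$, where $G_1, G_2$ are independent standard Gumbels, $\beta > 0$, and $\alpha \in \mathbb{R}$ absorbs the sub-leading centring discrepancy. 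The limit is a shifted and rescaled logistic and, in particular, has full support on $\mathbb{R}$.

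\textbf{Step 3: conclusion and main obstacle.} Combining Steps 1 and 2,
\[
\mathbb{P}(\textrm{Cross}_{\ell_R}(R,R)) = \mathbb{P}\bigl(W_R \le \ell_R \sqrt{\log R}\bigr).
\]
If $\ell_R \sqrt{\log R} \to \infty$, this tends to $1$ by tightness of $W_R$. If $\limsup_R \ell_R \sqrt{\log R} \le C < \infty$, then for every $\varepsilon > 0$ the probability is eventually bounded above by $\mathbb{P}(W_R \le C + \varepsilon)$, which converges to $\mathbb{P}(W \le C + \varepsilon) < 1$ at continuity points of $W$; letting $\varepsilon \to 0$ along such points yields the strict upper bound. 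Step~1 is really an observation and Step~3 is routine once Step~2 is in hand, so I expect the main technical obstacle to be Step~2: one needs the joint Gumbel scaling with enough precision to see that the $O(1/\sqrt{\log R})$ discrepancy between $b_R^{(1)}$ and $b_R^{(2)}$ (arising from possibly unequal spectral moments) is of the \emph{same} order as the Gumbel fluctuations and is therefore absorbed into the constant $\alpha$, rather than blowing up the limit. This is precisely the regime for which the assumption $K_1(0) = K_2(0)$ is designed, and it should follow directly from the smooth-Gaussian extreme-value propositions the paper establishes.
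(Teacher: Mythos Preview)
Your proof is correct and, in Step~1, sharper than the paper's. The paper never writes down the deterministic equivalence $\textrm{Cross}_\ell(R,R)=\{M_1^R+m_2^R\le\ell\}$; instead it works with one-sided implications. For the first claim it defines the event $\mathcal{B}_T^h=\{\sup_{[0,T]}g_2>L_T-h/\sqrt{\log T},\ \inf_{[0,T]}g_1>-L_T-h/\sqrt{\log T}\}$, observes that $\mathcal{B}_T^h$ forces a left-right crossing in $\{f>-2h/\sqrt{\log T}\}$ along the horizontal line through $\argmax g_2$, invokes the symmetry $f\stackrel{d}{=}-f$ to turn this into a lower bound on $\mathbb{P}(\textrm{Cross}_{2h/\sqrt{\log T}}(T,T))$, computes the limit via Propositions~\ref{p:scaling}--\ref{p:ppc}, and sends $h\to\infty$. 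For the second claim it uses a mirror-image event $\mathcal{C}_T^h$ that produces a top-bottom crossing in $\{f>2h/\sqrt{\log T}\}$ and hence blocks $\textrm{Cross}_{2h/\sqrt{\log T}}(T,T)$. Your route is cleaner: the exact identity in Step~1 collapses the crossing event to a threshold event for the single scalar $W_R$, so the entire theorem becomes a statement about the tightness and nondegeneracy of its (logistic) limit, and you avoid the $f\leftrightarrow -f$ symmetry detour. As a bonus, your argument immediately yields the explicit limit $\mathbb{P}(\textrm{Cross}_{c/\sqrt{\log R}}(R,R))\to\mathbb{P}(W\le c)$ for fixed $c$, which the paper's inequality-based argument does not directly give.
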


Let us highlight two aspects of this result. First, it presents a rare instance among percolation models in which the size of the critical window can be computed precisely. Second, $1/\sqrt{\log R}$ is much larger than the corresponding (conjectured) size of the critical window for models in the Bernoulli universality class, expected to be of order~$R^{-1/\nu + o(1)}$ for $\nu = 4/3$ the \textit{correlation length exponent} (see, e.g., \cite{gri99}). This is quite natural since, as we shall see in Section \ref{s:2}, the degeneracies in the model mean that crossing events $\textrm{Cross}_0(R, R)$ can be well-approximated by exceedence events for the extrema of the Gaussian processes $g_i$ defined in \eqref{e:g}, and $1/\sqrt{\log R}$ is the scale of the fluctuations of the extrema at scale $R$.

\subsection{Further discussion and related models}
While here we study stationary Gaussian fields, one can generalise the set-up by considering $f(x) = f(x_1, x_2) = g_1(x_1) + g_2(x_2)$ where $g_i$ are independent processes not required to be stationary. In the case that $g_i$ are Brownian motions, this model is known as \textit{additive Brownian motion}, and the boundedness of its nodal domains was proven in \cite{dm01} (see also \cite{pet08} for a discrete counterpart of this result for \textit{corner percolation}, which can be viewed as the collection of level sets of additive random walks). 

\smallskip
One important difference to the stationary case is that additive Brownian motion does \textit{not} exhibit a phase transition at the critical level $\ell_c = 0$, and in fact the excursion sets $\{f \le \ell\}$ are almost surely bounded for \textit{every} $\ell \in \mathbb{R}$ (i.e.\ $\ell_c = \infty$). On the other hand, the structural properties of Brownian motion (independent increments etc.) allow one to go further in describing the geometry of the nodal sets, for instance computing the critical exponents that govern the diameter and boundary length (see, e.g., \cite{pet08} for critical exponents in the discrete set-up). Notably these exponents are different from those of Bernoulli percolation, due to the long-range dependence in the model, and we conjecture this is also true in the stationary case.

\smallskip
One can generalise the model in a different direction by considering the field $f(x) = f(x_1, x_2) = \sum_{i = 1,\ldots,k} g_i( \langle x, v_i \rangle)$ where $g_i$ are independent stationary Gaussian processes and $v_i \in \mathbb{S}^1$ are distinct directions on the unit sphere that are not co-linear; the model we consider is the case $k=2$ with $v_i = e_i$ the basis directions. While we believe that the conclusions of Theorem \ref{t:main} hold true for all $k \ge 2$ and $v_i$, we conjecture that the nodal domains $\{f \le 0\}$ have different quantitative behaviour in the case $k \ge 3$ compared to $k=2$, for instance under mild conditions on $g_i$ we believe that the critical exponents match those of Bernoulli percolation as soon as $k \ge 3$ (see \cite[Conjecture 1.3]{pet08} for a related conjecture for corner percolation).

\medskip
\section{Proof of the main results}
\label{s:2}

Recall the decomposition $f(x_1, x_2) = g_1(x_1) + g_2(x_2)$, where $g_i$ are stationary Gaussian processes. The advantage of this decomposition is that, in order to analyse the phase transition for $f$, it will suffice to study the extrema of the processes $g_i$ on large intervals. This avoids the need to invoke sharp threshold criteria (as in, for instance, \cite{rv20,mv20,mrv20} following the classical approach introduced in \cite{kes80}).

\smallskip
We begin by stating auxiliary results on the extrema of stationary Gaussian processes. Let $g$ be a continuous centred stationary Gaussian process on $\mathbb{R}$ with covariance kernel $K$. Throughout we assume that $g$ is almost surely $C^2$-smooth and satisfies $K(x) \log |x| \to 0$ as $|x| \to \infty$. For simplicity we also normalise $g$ so that $\mathbb{E}[g(0)^2] = 1$. 

\smallskip
We first recall the well-known scaling limit of the supremum of $g$ on large intervals. Let $L_T = \sqrt{2 \log T}$.

\begin{proposition}[Scaling limit of the supremum; {\cite[Theorem 8.2.7]{llr83}}] 
\label{p:scaling}
As $T \to \infty$,
\begin{equation}
\label{e:scaling}
\frac{\sup_{t \in [0,T]} g(t) - L_T}{ 1/ L_T}  \Rightarrow \mathcal{G}   + \log(\sqrt{\lambda_2}) - \log( 2 \pi)  
\end{equation}
in law, where $\mathcal{G}$ denotes a standard Gumbel random variable (i.e.\ $\mathbb{P}(\mathcal{G} \le x) =  e^{-e^{-x}}$), and $\lambda_2 = \mathbb{E}[g'(0)^2] = -K''(0) \in (0, \infty)$. 
\end{proposition}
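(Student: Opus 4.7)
The plan is to follow the classical Rice--Poisson approach for extrema of smooth stationary Gaussian processes: link the supremum to the absence of upcrossings of a high level, compute the expected number of upcrossings via Rice's formula, and then promote this to a Poisson limit theorem using the Berman condition.

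First, by Rice's formula applied to the $C^2$-smooth, unit-variance centred stationary process $g$, the expected number of upcrossings $N_T(u)$ of a level $u$ in $[0,T]$ is
\[ \mathbb{E}[N_T(u)] = \frac{T \sqrt{\lambda_2}}{2\pi}\, e^{-u^2/2}. \]
Choosing $u = u_T(x) := L_T + x/L_T$ and expanding $u_T(x)^2/2 = \log T + x + O(1/\log T)$ yields
\[ \mathbb{E}[N_T(u_T(x))] \longrightarrow \tau(x) := \frac{\sqrt{\lambda_2}}{2\pi}\, e^{-x}. \]

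Next I would prove that $N_T(u_T(x))$ converges in law to a Poisson random variable of mean $\tau(x)$. The standard route is a Bernstein blocking argument: partition $[0,T]$ into long blocks separated by shorter gaps, use Rice to show that upcrossings in the gaps are negligible, and compare the joint law of the per-block upcrossing indicators to the corresponding product measure. The Berman condition $K(t)\log|t| \to 0$ enters here through a Slepian-type normal-comparison inequality, whose error is controlled by a weighted sum of cross-block covariances with exponential prefactors roughly of size $\exp(-u_T^2/(1+|K|))$; this sum vanishes precisely under the Berman assumption. Poisson convergence then follows by checking convergence of the first factorial moment (Rice) and vanishing of higher factorial moments via a double Rice-type estimate that exploits $C^2$-smoothness and non-degeneracy $\lambda_2 \in (0,\infty)$.

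Finally, I would translate back to the supremum. The event $\{\sup_{[0,T]} g \le u_T(x)\}$ equals $\{g(0)\le u_T(x),\ N_T(u_T(x))=0\}$; since $u_T(x)\to\infty$ the first event has probability tending to one, so
\[ \mathbb{P}\!\left(\sup_{t\in[0,T]} g(t) \le u_T(x)\right) \longrightarrow e^{-\tau(x)} = \exp\!\left(-\frac{\sqrt{\lambda_2}}{2\pi}\, e^{-x}\right). \]
The right-hand side is the CDF of $\mathcal{G} + \log\sqrt{\lambda_2} - \log(2\pi)$, giving the stated convergence in law. The main obstacle is the Poisson approximation in the second step: combining the normal-comparison lemma with Bernstein blocking is the point at which the Berman condition, the $C^2$-smoothness, the non-degeneracy of $\lambda_2$, and a careful choice of block and gap scales all enter non-trivially; the other two steps are essentially computational.
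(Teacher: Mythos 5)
The paper does not prove Proposition \ref{p:scaling} but cites it directly from \cite[Theorem 8.2.7]{llr83}, and your sketch correctly reconstructs that textbook's argument: Rice's formula gives $\mathbb{E}[N_T(u_T(x))] \to \frac{\sqrt{\lambda_2}}{2\pi}e^{-x}$ for $u_T(x)=L_T+x/L_T$, Bernstein blocking together with the normal-comparison lemma under the Berman condition $K(t)\log|t|\to 0$ upgrades this to Poisson convergence, and the translation to $\mathbb{P}(\sup_{[0,T]}g \le u_T(x)) \to \exp(-\frac{\sqrt{\lambda_2}}{2\pi}e^{-x})$ matches the CDF of $\mathcal{G}+\log\sqrt{\lambda_2}-\log(2\pi)$. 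This is the same route the cited reference takes, and the computations are correct.
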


More recently the corresponding sharp concentration bounds have been established:

\begin{proposition}[Concentration of the supremum; see {\cite{tan15}}]
\label{p:concen}
There exist $c_1, c_2 > 0$, such that, for each $T \ge 2$ and $x \ge 0$,
\begin{equation}
\label{e:concen}
\mathbb{P} \Big( \big|\sup_{t \in [0,T]} g(t) - L_T \big|  > x/L_T \Big) \le   c_1 e^{- c_2 x}  .
\end{equation}
\end{proposition}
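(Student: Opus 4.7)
The plan is to treat the upper and lower tails of $\sup_{[0,T]} g - L_T$ separately, because they are controlled by quite different mechanisms: the upper tail reflects the rarity of a single high excursion anywhere on $[0,T]$, whereas the lower tail expresses that, across $N \approx T$ weakly dependent unit blocks, at least one supremum must be close to $L_T$ with overwhelming probability.

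\emph{Upper tail.} I would appeal to Rice's formula, which for a $C^2$-smooth stationary Gaussian process gives
\[\mathbb{E}[U_u([0,T])] = \tfrac{T}{2\pi}\sqrt{\lambda_2}\,e^{-u^2/2},\]
where $U_u$ denotes the number of upcrossings of level $u$ and $\lambda_2 = -K''(0) \in (0,\infty)$. Since $\{\sup_{[0,T]} g \ge u\} \subseteq \{g(0) \ge u\} \cup \{U_u \ge 1\}$, Markov's inequality gives $\mathbb{P}(\sup_{[0,T]} g \ge u) \le C(T+1)e^{-u^2/2}$. Substituting $u = L_T + x/L_T$ and expanding $u^2/2 = \log T + x + x^2/(2L_T^2)$ produces a bound of the form $Ce^{-x}$, which is the required exponential upper tail.

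\emph{Lower tail.} I would partition $[0,T]$ into $N = \lfloor T \rfloor$ unit intervals $I_j$, set $M_j = \sup_{I_j} g$, and compare the joint law of $(M_j)$ with that of independent copies via the Normal Comparison Lemma (Berman's inequality, cf.\ \cite[Thm.~4.2.1]{llr83}) applied to a sufficiently fine finite-dimensional discretisation. The comparison error takes the form
\[C\sum_{0 < |j| \le N} (N - |j|)\,\frac{|K(j)|}{\sqrt{1 - K(j)^2}}\exp\!\left(-\frac{u^2}{1+|K(j)|}\right),\]
and the hypothesis $K(r)\log r \to 0$ is precisely Berman's condition that renders this error $o(1)$ at levels $u$ of order $L_T$. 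Combined with the classical single-block estimate $\mathbb{P}(M_1 \ge u) \ge c\, e^{-u^2/2}$ for $u$ large (obtained via a two-point Slepian comparison against independent Gaussians), the independent-product bound yields
\[\mathbb{P}(\sup_{[0,T]} g \le L_T - x/L_T) \le (1 - c\,e^{-u^2/2})^N + o(1) \le \exp(-c'e^x) + o(1),\]
which is dominated by $c_1 e^{-c_2 x}$. The regime $x \gtrsim L_T$ (where the $o(1)$ term becomes relevant) is absorbed by using the trivial bound $\mathbb{P}(\cdot) \le 1$, adjusting the constants $c_1, c_2$ appropriately.

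The main obstacle is ensuring the Berman comparison error is uniformly $o(1)$ over the entire range of thresholds: the near-diagonal terms of the sum require the quadratic approach $K(j) \to 1$ as $j \to 0$ (which is supplied by $C^2$-smoothness), while the far-field terms require the sharpness of the hypothesis $K(r)\log r \to 0$. The standard remedy is to split the comparison sum at a slowly growing cutoff $R(T) \to \infty$ and estimate each piece separately, which constitutes the technical heart of Tan's argument.
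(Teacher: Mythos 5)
Your Rice-formula bound for the upper tail is fine: $\mathbb{P}(\sup_{[0,T]} g > L_T + x/L_T) \le CTe^{-(L_T+x/L_T)^2/2} = Ce^{-x-x^2/(2L_T^2)} \le Ce^{-x}$ is correct and matches the required rate. But the lower tail is where your argument breaks down, and in a way that matters. Berman comparison is the right tool for proving the \emph{limit theorem} (Proposition~\ref{p:scaling}), i.e.\ convergence in distribution at the Gumbel scale, but it does not give the uniform-in-$(x,T)$ concentration estimate \eqref{e:concen}. The comparison error is only $o(1)$ at the fixed Gumbel level $u\approx L_T$; as soon as you drop to $u = L_T - x/L_T$, the dominant factor in the Berman sum behaves like $T^2|K|\,e^{-u^2} \asymp |K|\,e^{2x}$ (up to discretisation and cutoff bookkeeping), so the ``$o(1)$'' in your display is really $o(1)\cdot e^{2x}$ (or worse, once near-diagonal terms are counted), where the $o(1)$ has no explicit rate under the bare hypothesis $K(r)\log r\to 0$. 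This means the comparison error exceeds the target $c_1 e^{-c_2 x}$ already for $x$ of the order of $\log\log T$, not $x\gtrsim L_T$ as you assert. The classical Gaussian concentration \eqref{e:clacon} only takes over for $x\gtrsim \log T$, so there is a genuinely uncovered intermediate range $\log\log T \lesssim x \lesssim \log T$ where neither piece of your argument applies, and your fallback to the trivial bound $\mathbb{P}\le 1$ cannot help since $c_1 e^{-c_2 x}<1$ there. This is precisely the gap that motivates the hypercontractivity approach.

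The paper's proof is structurally unrelated to yours: it proves the variance-interpolation bound $\mathrm{Var}[e^{\theta S_{T,\varepsilon}}] \le c\theta^2/(\log T)\cdot \mathbb{E}[e^{2\theta S_{T,\varepsilon}}]$ (Proposition~\ref{p:concen2}) via Chatterjee's Ornstein--Uhlenbeck interpolation formula \eqref{e:var} and hypercontractivity of the OU semigroup, with the near-diagonal contribution tamed by a polynomial delocalisation estimate for the argmax $I$ rather than by any normal-comparison device, and then feeds that variance inequality into a general sub-Gamma concentration lemma of Tanguy. The Berman condition enters only to guarantee $|K(x-y)|\lesssim 1/\log T$ for $|x-y|\ge T^\gamma$, which is a much weaker use than what your argument demands of it. In short: the upper tail of your sketch is sound, but the lower tail as written does not close, and the paper deliberately routes around extreme-value comparison precisely because that method does not deliver the quantitative bound.
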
 

This result is essentially due to Tanguy \cite{tan15}, although he proved it under the additional assumption that the covariance kernel $K$ is non-increasing, and hence only for processes which are positively-correlated. Since we wish to work with non-positively-correlated processes, in Section \ref{s:concen} we show how to adapt the proof of \cite{tan15} to lift this assumption. The bound in \eqref{e:concen} is sharp in the sense that it captures the exponential (right-)tail of the limiting Gumbel random variable in \eqref{e:scaling} on the correct scale $1/L_T$; indeed the classical statement of Gaussian concentration of the supremum (see, e.g., \cite[Theorem 2.1]{a90})
\begin{equation}
\label{e:clacon}
\mathbb{P} \Big( \big|\sup_{t \in [0,T]} g(t)  - L_T \big|  > x \Big) \le   2 e^{-  x^2/2 }  
\end{equation}
would not be sufficient for our purposes.

\smallskip
Finally we state a stronger version of Proposition \ref{p:scaling} that gives the full point process convergence for local maxima and minima of $g$:

\begin{proposition}[Point process convergence of local minima and maxima]
\label{p:ppc}
Let $(m_i)_{i \ge 1}$ and $(n_i)_{i \ge 1}$ denote the positions of the local maxima and minima of $g$ respectively (since $g$ is $C^2$-smooth these are countable). Then, as $T \to \infty$,
\[ \Big( \sum_{i \ge 1} \delta_{(m_i/ T,  L_T( g(m_i) - L_T) )} ,  \sum_{i \ge 1} \delta_{(n_i/ T,  L_T(- g(n_i) - L_T) )}  \Big) \Rightarrow ( \mathcal{M}, \mathcal{N}  )   \]
where $\delta_{(x, y)}$ denotes a Dirac mass at $(x, y) \in \mathbb{R}^2$, $\mathcal{M}$ and $\mathcal{N}$ are independent Poisson point processes on $\mathbb{R} \times (-\infty, \infty]$ with intensity 
\[ \frac{\sqrt{\lambda_2}}{2\pi}  dx \otimes e^{-y} dy , \] 
and $\Rightarrow$ denotes vague convergence in the sense of point processes.
\end{proposition}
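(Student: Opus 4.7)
The plan is to adapt the classical point process convergence of extremes for smooth stationary Gaussian processes (as developed in Leadbetter-Lindgren-Rootz\'en \cite{llr83}) to the joint maxima-minima setting. I would split the proof into three steps: (i) Poisson convergence of the rescaled point process of local maxima alone; (ii) the analogue for local minima, which follows from (i) applied to $-g$; and (iii) asymptotic independence of $\mathcal{M}$ and $\mathcal{N}$.

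For step (i), the key input is a Kac-Rice intensity calculation together with a Poisson approximation via Berman's normal comparison lemma. At level $u = u_T(y) := L_T + y/L_T$, the Rice formula gives an up-crossing intensity of $\frac{\sqrt{\lambda_2}}{2\pi} e^{-u^2/2}$ per unit length, and asymptotically each high up-crossing is in bijection with a nearby local maximum above $u$ (the $C^2$-smoothness ensures the local extrema are isolated). Using $u_T(y)^2/2 = \log T + y + o(1)$, this produces an expected count of $(b-a) \cdot \frac{\sqrt{\lambda_2}}{2\pi} e^{-y}(1 + o(1))$ for local maxima with rescaled height above $y$ whose locations lie in $[aT, bT]$, matching the claimed Poisson intensity. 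Asymptotic independence across disjoint space-height rectangles follows from the normal comparison lemma combined with $K(x) \log |x| \to 0$, which is precisely Berman's condition for asymptotic independence of high-level exceedances.

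Step (ii) is immediate by applying step (i) to $-g$, which is stationary Gaussian with the same covariance $K$, so that local minima of $g$ correspond bijectively to local maxima of $-g$ at height $-g(n_i)$. For step (iii), I would prove joint independence of $\mathcal{M}$ and $\mathcal{N}$ by a further application of Berman-type normal comparison to mixed-sign events of the form $\{g(s_j) > L_T + a_j/L_T,\ g(t_k) < -L_T - b_k/L_T\}$: at well-separated evaluation points the Gaussian density factorises at the pair of extremal levels $(+L_T,-L_T)$, and the residual dependence is controlled by the covariances $K(|s_j - t_k|)$, which are $o(1/\log T)$ uniformly in the relevant range by the Berman assumption. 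Passing to Laplace functionals of compactly supported continuous test functions in the vague topology then yields the joint point process convergence in the standard way.

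The main obstacle is step (iii), since joint convergence to \emph{independent} Poisson processes for the positive and negative extremes is not quite covered by the standard LLR83 statements and requires applying normal comparison simultaneously to high positive and high negative levels. The bookkeeping here is routine but somewhat tedious; once it is in place, tightness and the convergence of finite-dimensional distributions fall out of the same estimates in a familiar template.
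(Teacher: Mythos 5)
The paper does not prove this proposition from first principles; it cites \cite[Theorem 9.5.2]{llr83} for the Poisson convergence of the rescaled local maxima and \cite[Theorem 11.1.5]{llr83} (together with the discussion following it) for the asymptotic independence of the maxima and minima point processes. Your sketch reconstructs the machinery behind those theorems --- Kac--Rice intensities for high upcrossings, Berman's normal comparison lemma for the Poisson approximation, the $g \mapsto -g$ symmetry for the minima --- and this is the correct underlying route; you also rightly identify the joint independence in step (iii) as the part not covered verbatim by a single off-the-shelf statement in \cite{llr83}, which is exactly why the paper cites Theorem 11.1.5 separately. The one point to tighten is the uniformity claim in step (iii): the Berman condition $K(x)\log|x| \to 0$ gives $K(|s_j - t_k|) = o(1/\log T)$ only for pairs separated by at least some small power of $T$, not uniformly over all pairs; for nearby pairs one needs the usual near/far decomposition, in which the near contribution is controlled because (so long as $K$ stays bounded away from $-1$, as it must for a non-degenerate $C^2$ process) the joint probability of a high local maximum and a low local minimum within a bounded window decays strictly faster than $T^{-1}$, beating the $O(T)$ count of such windows. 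With that caveat filled in, your proposal is a faithful reconstruction of the source the paper invokes rather than a genuinely different argument.
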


The point process convergence for local maxima (or, equivalently, local mimima) is stated in \cite[Theorem 9.5.2]{llr83}, and see \cite[Theorem 11.1.5]{llr83} and the discussion thereafter for the independence of the limiting point processes for maxima and minima. The main consequence of Proposition \ref{p:ppc} that we draw is that suprema on disjoint intervals, infima on disjoint intervals, as well as the supremum and infimum on \textit{any} intervals, are all jointly asymptotically independent.

\smallskip
We now show how Propositions \ref{p:scaling}--\ref{p:ppc} imply our main results, beginning with a simple consequence of these propositions:

\begin{proposition}
\label{p:supinf}
For every $h \in \mathbb{R}$,
\begin{align}
\label{e:supinf1}
&  \lim_{T \to \infty}  \mathbb{P} \Big( \sup_{t \in [-2T,-T]} g(t) > L_T, \sup_{t \in [T, 2T]} g(t) > L_T, \inf_{t \in [-2T, 2T]} g(t) > -L_T \Big)  \\
 \nonumber &  \qquad   \ \ =  \mathbb{P} \big( \mathcal{G} \! > \! -\log(\sqrt{\lambda_2}) + \log(2 \pi) \big)^2  \, \mathbb{P}\big( \mathcal{G} \! <\! -\log(\sqrt{\lambda_2}) + \log(2 \pi) - \log 4  \big) .
 \end{align}
Moreover, for every $c > 0$ there exist $c_1, c_2 > 0$ such that, for every $T \ge 2$,
\begin{equation}
\label{e:supinf2}
\mathbb{P} \Big( \sup_{t \in [0,T]} g(t) > L_T - c , \inf_{t \in [0, T]} g(t) > -L_T - c \Big)  \ge 1 - c_1 e^{- c_2  \sqrt{\log T} }  . 
  \end{equation}
\end{proposition}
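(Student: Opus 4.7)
The plan is to derive the two claims separately by direct application of the appropriate extreme-value inputs: \eqref{e:supinf1} from the joint point-process convergence (Proposition \ref{p:ppc}), and \eqref{e:supinf2} from the sharp concentration bound (Proposition \ref{p:concen}) combined with the symmetry $-g \stackrel{d}{=} g$.

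For \eqref{e:supinf1}, the idea is to rewrite each of the three events as a functional of the rescaled point processes $\mathcal{M}_T, \mathcal{N}_T$ appearing in Proposition \ref{p:ppc}. Modulo a vanishing boundary contribution (the probability that the $\sup$ or $\inf$ of $g$ over one of the intervals is attained at an endpoint $\pm T$ or $\pm 2T$ with absolute value exceeding $L_T$ is $O(e^{-\log T}/\sqrt{\log T})$, by a standard Gaussian tail bound applied to the marginals), the three events become
\[ \mathcal{M}_T([-2,-1] \times (0,\infty]) \ge 1 , \ \ \mathcal{M}_T([1, 2] \times (0,\infty]) \ge 1 , \ \ \mathcal{N}_T([-2, 2] \times (0,\infty]) = 0 . \]
Since these live on disjoint $x$-projections of $\mathcal{M}$ together with a region of the independent process $\mathcal{N}$, Proposition \ref{p:ppc} implies that the limiting joint probability factorises into three independent factors. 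A direct computation with the intensity $\tfrac{\sqrt{\lambda_2}}{2\pi}\, dx \otimes e^{-y}\,dy$ yields each $\mathcal{M}$-factor as $1 - \exp(-\sqrt{\lambda_2}/(2\pi))$ and the $\mathcal{N}$-factor as $\exp(-2\sqrt{\lambda_2}/\pi)$; re-writing these via the Gumbel CDF $\mathbb{P}(\mathcal{G} \le t) = e^{-e^{-t}}$ recovers the right-hand side of \eqref{e:supinf1}.

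For \eqref{e:supinf2}, I would apply a two-term union bound. Since $g$ is centred and Gaussian, $-g \stackrel{d}{=} g$, hence $\inf_{[0,T]} g \stackrel{d}{=} -\sup_{[0,T]} g$ and
\[ \mathbb{P}\big( \inf_{t \in [0,T]} g(t) \le -L_T - c \big) = \mathbb{P}\big( \sup_{t \in [0,T]} g(t) \ge L_T + c \big) \le \mathbb{P}\big( \big| \sup_{t \in [0,T]} g(t) - L_T \big| \ge c \big) , \]
while the complementary one-sided event $\{\sup_{t \in [0,T]} g(t) \le L_T - c\}$ is bounded by the same quantity. Applying Proposition \ref{p:concen} with $x = c L_T = c\sqrt{2\log T}$ controls each of these by $c_1 e^{-c_2 c L_T}$, and a union bound with a relabelling of constants produces the claimed $c_1' e^{-c_2' \sqrt{\log T}}$ tail. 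The only non-routine step in the whole proposition is the boundary verification in Part 1, which is easily dispatched as indicated; the remainder is a Poisson-process intensity calculation and a direct appeal to Proposition \ref{p:concen}.
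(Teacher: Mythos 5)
Your proof is correct and takes essentially the same approach as the paper. The only cosmetic difference is in how you evaluate the limiting probabilities in \eqref{e:supinf1}: you translate the three events directly into point-process events for $\mathcal{M}_T$, $\mathcal{N}_T$ (at the cost of the small boundary verification you note) and compute the Poisson intensities by hand, whereas the paper obtains the asymptotic independence from Proposition~\ref{p:ppc} in exactly the same way but evaluates each factor via Proposition~\ref{p:scaling} applied to the rescaled interval $[0,sT]$ together with the elementary asymptotic $L_{sT}(L_T - L_{sT}) \to -\log s$; the two are the same computation in different clothing, since Proposition~\ref{p:scaling} is just the one-dimensional marginal of Proposition~\ref{p:ppc}. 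Your argument for \eqref{e:supinf2} (union bound, symmetry $g \stackrel{d}{=} -g$, then Proposition~\ref{p:concen} at deviation $x = cL_T$) is identical to the paper's.
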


\begin{proof}
An elementary computation shows that, for any $s > 0$,
\[  \frac{L_T - L_{sT}}{1/L_{sT}} \to -\log s  \]
as $T \to \infty$. Hence by Proposition \ref{p:scaling} and the equality in law of $g$ and $-g$, as $T \to \infty$,
\[ \mathbb{P} \Big( \sup_{t \in [0, sT]} g(t) > L_T  \Big) \to \mathbb{P}\Big( \mathcal{G} > - \log(\sqrt{\lambda_2}) + \log(2 \pi) - \log s  \Big)  . \]
By the point process convergence in Proposition \ref{p:ppc} we have 
\begin{align*}
 & \lim_{T \to \infty} \mathbb{P} \Big( \sup_{t \in [-2T,-T]} g(t) > L_T, \sup_{t \in [T, 2T]} g(t) > L_T, \inf_{t \in [-2T, 2T]} g(t) > -L_T \Big) \\
 & \qquad  = \lim_{T \to \infty} \mathbb{P} \Big( \sup_{t \in [0,T]} g(t) > L_T \Big)^2  \mathbb{P} \Big( \sup_{t \in [0, 4T]} g(t)  <  L_T \Big)   
\end{align*}  
where we also used stationarity and the equality in law of $g$ and $-g$. The first statement then follows from \eqref{e:supinf1}. 

For the second statement we instead use the union bound
\[ \mathbb{P}(A \cap B) \ge 1 - \mathbb{P}(A^c) - \mathbb{P}(B^c)  , \quad \text{for all events } A, B,\]
together with the equality in law of $g$ and $-g$, to deduce that
\begin{align*}
&   \mathbb{P} \Big( \sup_{t \in [0,T]} g(t) > L_T - c , \inf_{t \in [0, T]} g(t) > -L_T - c \Big)   \\
&   \qquad  \ge 1 -  \mathbb{P} \Big( \sup_{t \in [0,T]} g(t) < L_T - c  \Big) - \mathbb{P} \Big( \sup_{t \in [0, T]} g(t) > L_T + c  \Big)  \\
& \qquad = 1 -  \mathbb{P} \Big( \big| \sup_{t \in [0, T]} g(t) - L_T \big| >  c \Big)  . 
\end{align*}
The result then follows from Proposition \ref{p:concen}.
\end{proof}

\begin{proof}[Proof of Theorem \ref{t:main}]
 Recall the decomposition $f(x_1, x_2) =  g_1(x_1) + g_2(x_2)$, where $g_i$ are centred stationary $C^2$-smooth Gaussian processes with covariance $K_i$. By applying a linear rescaling to the domain of $g$, without loss of generality we may assume that $\mathbb{E}[g_1'(0)^2] = \mathbb{E}[g_2'(0)^2] = 1$. Define $L_{1;T} = \sqrt{2 K_1(0) \log T}$ and $L_{2; T} = \sqrt{2 K_2(0) \log T}$, and define also $\tau(T) = T^{K_1(0)/K_2(0)}$ so that $L_{1;T} = L_{2;\tau(T)}$.  

\smallskip
We begin with the first statement. By monotonicity, it is sufficient to show that $\{f \le 0\}$ has bounded connected components almost surely. We say that a rectangle $R = [a_1, b_1] \times [a_2 , b_2]$ is \textit{blocking} if
\[    \min\{ g_1(a_1), g_1(b_1)  \} >  - \inf_{t \in [a_2, b_2] }  g_2(t)  \quad \text{and} \quad \min\{ g_2(a_2), g_2(b_2) \} > - \inf_{t \in [a_1, b_1] }  g_1(t) . \]
The relevance of a \textit{blocking} rectangle $R$ is that $f(x_1,x_2) = g_1(x_1) + g_2(x_2)> 0$ on the boundary~$\partial R$. Now, suppose there exists an $s \in \R$ such that 
\[  \sup_{t \in [-2T,-T]} g_1(t) > s, \sup_{t \in [T, 2T]} g_1(t) > s, \inf_{t \in [-2T, 2T]} g_1(t) > - s \]
and
\[ \sup_{t \in [-2\tau(T),-\tau(T)]} g_2(t) > s, \sup_{t \in [\tau(T), 2\tau(T)]} g_2(t) > s, \inf_{t \in [-2\tau(T), 2\tau(T)]} g_2(t) > - s . \]
Then the rectangle $R = [a_1, b_1] \times [a_2, b_2]$ is blocking, where
\[    a_1 = \argmax \{ g_1(t) : t \in [-2T, -T] \} \ , \quad b_1 = \argmax \{ g_1(t) : t \in [T, 2T] \}  , \]
\[    a_2 = \argmax \{ g_2(t) : t \in [-2\tau(T), -\tau(T)] \}  \  , \quad  b_2 = \argmax \{ g_2(t) : t \in [\tau(T), 2\tau(T)] \}  , \]
breaking ties arbitrarily if necessary. Since also 
\[ R \supset [-T, T] \times [-\tau(T), \tau(T)] ,\]
together with the independence of $g_1$ and $g_2$ we deduce that, for each $T > 0$,
\begin{align*}
& \mathbb{P}( \text{there exists a blocking rectangle containing $[-T, T] \times [-\tau(T), \tau(T)] $} ) \\
&   \quad \ge  \sup_{s \in \mathbb{R}}  \  \mathbb{P} \Big( \sup_{t \in [-2T,-T]} g_1(t) > s, \sup_{t \in [T, 2T]} g_1(t) > s, \inf_{t \in [-2T, 2T]} g_1(t) > - s \Big)  \\
& \quad  \quad \times \mathbb{P} \Big( \sup_{t \in [-2\tau(T),-\tau(T)]} g_2(t) > s, \sup_{t \in [\tau(T), 2\tau(T)]} g_2(t) > s, \inf_{t \in [-2\tau(T), 2\tau(T)]} g_2(t) > - s \Big)  .
\end{align*}
Setting $s = s(T) = L_{1;T} = L_{2;\tau(T)}$ in the above, by \eqref{e:supinf1}
\[\liminf_{T \to \infty} \mathbb{P}( \text{there exists a blocking rectangle containing $[-T, T] \times [-\tau(T), \tau(T)] $} )  > 0 .\]

\smallskip
To finish the proof we use an ergodic argument inspired by the `box lemma' in \cite{gkr88}. Since $g_1$ and $g_2$ are stationary Gaussian processes with correlation decaying at infinity, they are ergodic (Maruyama's theorem). Moreover, being independent, they are in fact jointly ergodic, and so $f$ is ergodic with respect to the `diagonal' shift $\theta_s := (x_1, x_2) \mapsto (x_1 + s, x_2 + s)$. Now let $\mathcal{E}$ be the event that for every rectangle $R_0$ there exists a blocking rectangle containing $R_0$. By monotonicity (recall also that $\tau(T) \to \infty$),
\[ \mathbb{P}(\mathcal{E}) \ge \liminf_{T \to \infty} \mathbb{P}(\text{there is a blocking rectangle containing $[-T, T] \times [-\tau(T), \tau(T)] $}) > 0 .\]
On the other hand, the event $\mathcal{E}$ is invariant under $\theta_s$, and so by ergodicity $\mathbb{P}(\mathcal{E}) = 1$. Since the existence of a blocking rectangle containing $R_0$ implies that $R_0$ is disconnected from infinity in $\{f \le 0\}$, on the event $\mathcal{E}$ the excursion set $\{f \le 0\}$ has only bounded connected components, which completes the proof.

\smallskip
We turn to the second statement, which we prove by adapting the classical construction of the unique infinite cluster in supercritical planar Bernoulli percolation. Fix $\ell > 0$, and for $n \in \mathbb{N}$ define $T_n = 2^n$ and the event
\begin{align*}
 \mathcal{S}_n & := \Big\{     \sup_{t \in [0,2T_n]} g_1(t) > L_{1;2T_n} - \ell/2, \inf_{t \in [0, 2T_n]} g_1(t) > - L_{1;2T_n} -\ell/2        \Big\} \\
 & \qquad \quad \cap \Big\{     \sup_{t \in [0,2 \tau(T_n)]} g_2(t) > L_{1;2T_n} - \ell/2, \inf_{t \in [0, 2 \tau(T_n)]} g_2(t) > - L_{1;2T_n} - \ell/2       \Big\} .
 \end{align*} 
 Note that, on the event $\mathcal{S}_n$, $f(x_1,x_2) = g_1(x_1) + g_2(x_2) >  -\ell$ on the line-segments
 \[    \{a_1\} \times [0, 2\tau(T_n)] \quad \text{and} \quad [0, 2T_n] \times \{a_2\}  ,  \]
 where
 \[ a_1 =      \argmax \{ g_1(t) : t \in [0, T_n] \} \quad \text{and} \quad      a_2 =  \argmax \{ g_2(t) : t \in [0, 2 \tau(T_n) ] \}  ,  \]
 breaking ties arbitrarily if necessary. Hence $\mathcal{S}_n$ implies the existence of a top-bottom path in $\{f \ge - \ell\} \cap ([0, T_n] \times [0, 2\tau(T_n)])$ (i.e.\ one that intersects $[0, T_n] \times  \{0\}$ and $[0, T_n] \times \{2 \tau(T_n)\}$) and also the existence of left-right path in $\{f \ge -\ell\} \cap ( [0, 2T_n] \times [0, 2\tau(T_n)])$ (i.e.\ one that intersects $\{0\} \times [0, 2\tau(T_n)]$ and $\{2T_n\} \times [0, 2\tau(T_n)]$). By independence and \eqref{e:supinf2} (recall also that $L_{1;T} = L_{2;\tau(T)}$), there are $c_1,c_2 > 0$ such that for all $n \ge 1$
 \[      \mathbb{P}( \mathcal{S}_n ) \ge 1 -  c_1e^{-c_2 \sqrt{n}} ,   \]
and so by the Borel-Cantelli lemma almost surely there is an $n_0 \ge 1$ such that $\cap_{n \ge n_0} \mathcal{S}_n$ occurs. Recalling that $T_n = 2^n$, this event implies the existence of an unbounded path in $\{f \ge -\ell\}$ that intersects $[0, T_{n_0}] \times \{0\}$, so we have proven that $\{f \ge -\ell\}$ contains an unbounded component almost surely. For uniqueness, recall the event $\mathcal{E}$ from the proof of the first statement of the theorem. This event implies that every compact domain is surrounded by a circuit in $\{f > 0\}$, which precludes the existence of multiple unbounded components in $\{f \ge -\ell\}$. Since $\mathcal{E}$ occurs almost surely, the unbounded component of $\{f \ge -\ell\}$ is therefore unique. Since $f = -f$ in law, this gives the result.
\end{proof}

\begin{proof}[Proof of Theorem \ref{t:3d}]
By monotonicity it suffices to prove the result for $\ell < 0$. Recall from the proof of Theorem \ref{t:main} that the $g_i$ are jointly ergodic. Hence almost surely one can find a $(s_3, \ldots , s_d) \in \mathbb{R}^{d-2}$ such that $g_i(s_i) < 2\ell/(d-2)$ for each $i \in 3, \ldots , d$. Set $\Sigma = \sum_{i = 3, \ldots , d} g_i(s_i) < 2\ell$ and consider the plane $P = \{  (x_1, \ldots , x_d) : x_i = s_i \text{ for } i = 3, \ldots , d\}$. Since $\tilde{f} = (f - \Sigma)|_P$ has the law of a centred planar additive Gaussian field satisfying the assumptions of Theorem~\ref{t:main}, by the second assertion of this theorem $ \{f \le \ell\}|_{P} = \{\tilde{f} \le \ell+\Sigma\} \supset \{ \tilde{f} \le -\ell\}$ contains an unbounded component almost surely.
\end{proof}
\begin{remark}
\label{r:unique}
Note that although the proof of Theorem \ref{t:3d} shows that the unbounded component of $\{f \le \ell\}|_P$ is unique, this does \textit{not} imply that $\{f \le \ell\}$ has a unique unbounded component.
\end{remark}

The proof of Theorems \ref{t:rsw} and \ref{t:cw} are similar to the proof of Theorem \ref{t:main} but we include the details for completeness. Let $\lambda_{2;i} = \mathbb{E}[g_i'(0)^2] = -K_i''(0) \in (0, \infty)$ for $i=1,2$. 

\begin{proof}[Proof of Theorem \ref{t:rsw}]
By applying a linear rescaling to the domain of $f$ (recall that we do \textit{not} assume that $\lambda_{2;i}   = 1$ so this rescaling is without loss of generality) it is enough to prove the result for the square-crossing event $\textrm{Cross}_0(T, T)$.

\smallskip
 Let us consider first the case that $K_1(0) = K_2(0)$ and without loss of generality suppose $K_1(0) = 1$. Recall that $L_T =  \sqrt{2 \log T}$ and define the event
  \begin{align*}\mathcal{A}_T & := \Big\{     \sup_{t \in [0,T]} g_1(t) > L_T  , \inf_{t \in [0, T]} g_1(t) > - L_T     \Big\} \\
 & \qquad \quad \cap \Big\{     \sup_{t \in [0,T]} g_2(t) > L_T , \inf_{t \in [0, T]} g_2(t) > - L_T   \Big\} .
 \end{align*} 
  Note that, on the event $\mathcal{A}_T$, $f(x_1,x_2) = g_1(x_1) + g_2(x_2) > 0$ on the line-segments
 \[    \{a_1\} \times [0, T] \quad \text{and} \quad [0, T] \times \{a_2\}  ,  \]
 where
 \[ a_1 =      \argmax \{ g_1(t) : t \in [0, T] \} \quad \text{and} \quad     a_2 =   \argmax \{ g_2(t) : t \in [0, T ] \}  ,  \]
 breaking ties arbitrarily if necessary. Hence $\mathcal{A}_T$ implies the existence of left-right and top-down paths in $\{f > 0\} \cap [0, T]^2$, and hence precludes $\textrm{Cross}_0(T, T)$ (i.e.\ a left-right path in $\{f \le 0\} \cap [0, T]^2$).  By Propositions \ref{p:scaling} and \ref{p:ppc}, and the symmetry of $g_i$ and $-g_i$ in law, as $T \to \infty$,
\[ \mathbb{P}( \mathcal{A}_T ) \to \prod_{i=1,2} \mathbb{P}( \mathcal{G} > - \log(\sqrt{\lambda_{2;i}}) + \log(2\pi)  )\mathbb{P}( \mathcal{G} < - \log(\sqrt{\lambda_{2;i}}) + \log(2\pi)  )  \in (0, 1) . \]
Since $ \mathcal{A}_T$ and $\textrm{Cross}_0(T, T) $ are disjoint we have
\[ \limsup_{T \to \infty} \mathbb{P}( \textrm{Cross}_0(T, T) )\le 1 -   \lim_{T \to \infty} \mathbb{P}( \mathcal{A}_T ) < 1. \]
By the equality in law of $f$ and $-f$ we also have
\[ \liminf_{T \to \infty} \mathbb{P}( \textrm{Cross}_0(T, T) )  \ge \lim_{T \to \infty} \mathbb{P}( \mathcal{A}_T )  > 0  , \] 
which proves the first statement.
 
 \smallskip
Now suppose that $K_1(0) < K_2(0)$ (the case $K_2(0) < K_1(0)$ is almost identical and we omit it). Recall that $L_{i;T} = \sqrt{2 K_i(0) \log T}$ for $i=1,2$, and define the event
\[ \mathcal{B}_T^h  = \Big\{     \sup_{t \in [0,T]} g_2(t) > L_{2;T} - h/\sqrt{\log T}   , \inf_{t \in [0, T]} g_1(t) > - L_{1;T}  - h/\sqrt{\log T}    \Big\}  . \]
Note that, on the event $\mathcal{B}_T^h$, $f(x_1,x_2) = g_1(x_1) + g_2(x_2) > L_{2;T} - L_{1;T} - 2h/\sqrt{\log T} $ on the line-segment $[0, T] \times \{a\}$, where $a =    \argmax \{ g_2(t) : t \in [0, T ] \}$, breaking ties arbitrarily if necessary. Hence $\mathcal{B}_T^h$ implies the existence of a left-right path in $\{f > L_{2;T} - L_{1;T} - 2h/\sqrt{\log T}\} \cap [0, T]^2$. Moreover, if $h \in \mathbb{R}$ is fixed and $T$ taken sufficiently large so that
 \[  L_{2;T} - L_{1;T} > 2h/\sqrt{\log T} ,  \]
 then $\mathcal{B}_T^h$ implies also the existence of a left-right path in $\{f \ge 0\} \cap [0, T]^2$, which is an event of equal probability to $\textrm{Cross}_0(T, T) $ by the equality in law of $g_i$ and $-g_i$. Combining with Proposition \ref{p:scaling} we have that
 \begin{align*}
 & \liminf_{T \to \infty} \mathbb{P}( \textrm{Cross}_0(T, T)  ) \ge \lim_{T \to \infty} \mathbb{P}(\mathcal{B}_T^h ) \\
 & \qquad  =  \mathbb{P} \big( \mathcal{G} > -\log(\sqrt{\lambda_{2;2}}) + \log(2 \pi) - \sqrt{2}h \big)  \mathbb{P} \big( \mathcal{G}<  -\log(\sqrt{\lambda_{2;1}}) + \log(2 \pi) + \sqrt{2}h \big)  . 
 \end{align*}
Since the right-hand side of the above tends to $1$ as $h \to \infty$, we deduce the result by taking $h \to \infty$.
\end{proof}

\begin{proof}[Proof of Theorem \ref{t:cw}]
Recall the event
\[ \mathcal{B}_T^h  = \Big\{     \sup_{t \in [0,T]} g_2(t) > L_T - h/\sqrt{\log T}   , \inf_{t \in [0, T]} g_1(t) > - L_T  - h/\sqrt{\log T}    \Big\}  \]
which, as in the proof of the second statement of Theorem \ref{t:rsw}, implies the existence of a left-right path in $\{f > -2h/\sqrt{\log T} \} \cap [0, T]^2$. Hence for fixed $h > 0$ we have 
 \begin{align*}
 & \liminf_{T \to \infty} \mathbb{P}( \textrm{Cross}_{2h/\sqrt{\log T}} (T, T)  )   \ge  \lim_{T \to \infty} \mathbb{P}(\mathcal{B}_T^h )  \\
 &  \qquad    =   \mathbb{P} \big( \mathcal{G} > -\log(\sqrt{\lambda_{2;2}}) + \log(2 \pi) - \sqrt{2}h \big)  \mathbb{P} \big( \mathcal{G} < -\log(\sqrt{\lambda_{2;1}}) + \log(2 \pi) + \sqrt{2}h \big) . 
 \end{align*}
Since the right-hand side of the above tends to $1$ as $h \to \infty$, we deduce the first claim of the theorem. On the other hand, if we define instead
\[ \mathcal{C}_T^h  = \Big\{     \sup_{t \in [0,T]} g_1(t) > L_T + h/\sqrt{\log T}   , \inf_{t \in [0, T]} g_2(t) > - L_T  + h/\sqrt{\log T}    \Big\}  , \]
then, by the same argument, the event $\mathcal{C}_T^h$ implies the existence of a top-bottom path in $\{f >  2h/\sqrt{\log T}\} \cap [0, T]^2$, and hence precludes $\textrm{Cross}_{2h/\sqrt{\log T}} (T, T) $ (i.e.\ a left-right path in $\{f \le 2h/\sqrt{\log T} \} \cap [0, T]^2$). Thus we also have
 \begin{align*}
 &  \limsup_{T \to \infty} \mathbb{P}( \textrm{Cross}_{2h/\sqrt{\log T}} (T, T)  )  \le 1 -  \lim_{T \to \infty} \mathbb{P}(\mathcal{C}_T^h )     \\ &  \quad    =   1 - \mathbb{P} \big( \mathcal{G} > -\log(\sqrt{\lambda_{2;1}}) + \log(2 \pi) + \sqrt{2}h \big)  \mathbb{P} \big( \mathcal{G} <  -\log(\sqrt{\lambda_{2;2}}) + \log(2 \pi) - \sqrt{2}h \big)   .
 \end{align*}
 Observing that the latter quantity is strictly less than one for all $h \in \mathbb{R}$, since $\ell \mapsto \textrm{Cross}_\ell(T, T)$ is increasing this implies the second claim of the theorem.
\end{proof}

\medskip
\section{Concentration of the supremum}
\label{s:concen}

In this section we prove Proposition \ref{p:concen} following closely the approach of \cite{tan15}. Recall that $g$ is a $C^2$-smooth centred stationary Gaussian process with covariance kernel $K$ satisfying $K(0) = 1$ and $K(x) \log |x| \to 0$ as $|x| \to \infty$. As observed in \cite{tan15}, to obtain the required concentration of $\sup_{t \in [0, T]} g(t)$ it is sufficient to prove the following:

\begin{proposition}
\label{p:concen2}
There exists a $c > 0$ such that, for every $T \ge 2$, $\varepsilon \in (0,1)$ and $\theta \in \mathbb{R}$,
\begin{equation}
\label{e:concen2}
  \textrm{Var}[e^{\theta S_{T,\varepsilon}}] \le  c \theta^2 / (\log T)  \times  \mathbb{E}[e^{2\theta S_{T,\varepsilon}}] , 
  \end{equation}
where $S_{T,\varepsilon} = \sup_{t \in [0, T] \cap (\varepsilon \mathbb{Z})} g(t)$. 
\end{proposition}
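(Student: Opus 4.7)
The plan is to follow the approach of \cite{tan15}, with the key modification being the removal of the monotonicity/positivity assumption on $K$. The overall strategy is based on the Ornstein--Uhlenbeck interpolation for Gaussian vectors combined with Chatterjee-style superconcentration reasoning: the naive Gaussian Poincar\'e inequality gives $\mathrm{Var}[e^{\theta S_{T,\varepsilon}}] \le \theta^2 K(0)\, \mathbb{E}[e^{2\theta S_{T,\varepsilon}}]$, and we must extract a factor of $1/\log T$ from the fact that the argmax in the definition of the supremum is typically well-spread over $[0,T]\cap\varepsilon\mathbb{Z}$.

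First I would view $S_{T,\varepsilon} = \max_i X_i$, where $X = (X_i)_i$ is the centred Gaussian vector $(g(t_i))_i$ with $t_i \in [0,T]\cap\varepsilon\mathbb{Z}$ and covariance matrix $\Sigma_{ij} = K(t_i - t_j)$. Applying Mehler's interpolation formula to $F = e^{\theta S}$, and using that $\nabla_x \max_i x_i = e_{I^*(x)}$ at the (almost surely unique) argmax $I^*$, one obtains the identity
\[ \mathrm{Var}[e^{\theta S_{T,\varepsilon}}] = \theta^2 \int_0^\infty e^{-s}\, \mathbb{E}\big[e^{\theta S(X) + \theta S(X_s)}\, K(t_{I^*(X)} - t_{I^*(X_s)})\big]\, ds , \]
where $X_s = e^{-s} X + \sqrt{1-e^{-2s}}\, X'$ and $X'$ is an independent copy of $X$. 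The task then reduces to bounding the integrand by $O(\mathbb{E}[e^{2\theta S_{T,\varepsilon}}] / \log T)$.

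The integrand is controlled by splitting into two regimes. For small OU times $s$, the argmaxes are typically stable ($I^*(X) = I^*(X_s)$) and $K(0) = 1$, but the measure of this regime is itself $O(1/\log T)$, which is the content of a Chatterjee-type argmax-stability estimate; here I would use Proposition \ref{p:ppc}, which says that the rescaled argmax behaves like a Poisson point process on $[0,T]$ of intensity $\sqrt{\log T}/T$, to conclude that the probability of stability at OU time $s$ is $O(s \sqrt{\log T} \wedge 1)$. For $s$ bounded away from $0$, $X$ and $X_s$ are approximately independent, so the two argmaxes are typically separated by a distance of order $T$, and the Breman condition $K(x)\log |x| \to 0$ gives $|K(t_{I^*(X)} - t_{I^*(X_s)})| = o(1/\log T)$ on this event. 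A Cauchy--Schwarz estimate to decouple the $e^{\theta S(X)} e^{\theta S(X_s)}$ factor from the $K$-term and symmetry ($X \stackrel{d}{=} X_s$) then produces the $\mathbb{E}[e^{2\theta S_{T,\varepsilon}}]$ prefactor.

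The main obstacle is the removal of the monotonicity hypothesis on $K$. In \cite{tan15}, monotonicity (and hence positivity) of $K$ allows a pointwise $L^1$-type bound on the $K$-term that is valid regardless of the locations of $I^*(X)$ and $I^*(X_s)$. Without it, $K$ may oscillate and take negative values, and one must work with the averaged form of the Breman condition: for every $\delta > 0$ there exists $R_\delta$ such that $|K(x)| \le \delta/\log |x|$ for all $|x| \ge R_\delta$. This forces an \emph{anti-concentration} estimate for the joint law of $(I^*(X), I^*(X_s))$ ruling out configurations in which the two argmaxes lie within distance $R_\delta$, where $K$ might be of order one. The point process convergence of Proposition \ref{p:ppc} provides this anti-concentration in the untilted setting; the delicate part is to transfer it to the measure tilted by $e^{\theta S(X) + \theta S(X_s)}$ uniformly in $\theta \in \mathbb{R}$, which I would handle by a change-of-measure argument exploiting the concentration of $S_{T,\varepsilon}$ around $L_T$ (either bootstrapped from the classical Gaussian concentration \eqref{e:clacon} at a coarser scale, or from the weaker of the two regimes above in an inductive argument). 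The remaining interpolation over $s$ and collection of constants is routine and parallels \cite{tan15}.
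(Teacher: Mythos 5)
Your high-level outline matches the opening of the paper's argument: the Ornstein--Uhlenbeck interpolation identity for the variance, and the observation that without monotonicity of $K$ one can only use the Breman condition to bound $K(t_{I^*(X)} - t_{I^*(X_s)})$ when the argmaxes are far apart. The treatment of that ``far'' regime is essentially what the paper does. Where you diverge is in handling the near-diagonal contribution, and there the proposal has genuine gaps.

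The paper decomposes the variance sum \emph{spatially}: pairs $(x,y)$ within $O(T^\gamma)$ of each other form the near-diagonal term, with all other pairs controlled directly by $|K(x-y)|\le c/\log T$. For the near-diagonal term it invokes hypercontractivity of the OU semigroup, which converts $\mathbb{E}\bigl[e^{\theta S}e^{\theta S_s}\id_{I,I_s\in B_i^+}\bigr]$ into $\mathbb{P}(I\in B_i^+)^{\tanh(s/2)}\,\mathbb{E}[e^{2\theta S}]$; integrating $e^{-s}\alpha^{\tanh(s/2)}$ over $s$ yields a factor $\lesssim 1/|\log\alpha|$. This reduces the whole problem to a polynomial delocalization bound $\sup_y\mathbb{P}(I\in[y,y+1])\le c_2 T^{-c_3}$ under the \emph{untilted} law, proved via a Sudakov--Fernique lower bound on $\mathbb{E}[S]$ and the crude concentration estimate \eqref{e:clacon}. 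Crucially, no tilted anti-concentration is ever needed: the $\tanh(s/2)$ exponent extracts the small probability before the Gibbs factor $e^{2\theta S}$ is attached.

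Your proposal instead splits by OU time (small $s$ versus $s$ bounded away from $0$) and relies on a quantitative argmax-stability estimate $\mathbb{P}(I^*(X)=I^*(X_s))=O(s\sqrt{\log T}\wedge 1)$. Two problems. First, this cannot be extracted from Proposition~\ref{p:ppc}, which is a qualitative point-process limit theorem with no rate; establishing the stated bound uniformly in $T$ and $\varepsilon$ (and compatible with the discretization $\varepsilon\mathbb{Z}$) would require substantial new work. Second, and more seriously, you correctly identify that any such delocalization must be transferred to the measure tilted by $e^{\theta S(X)+\theta S(X_s)}$, uniformly in $\theta$, and you acknowledge this is ``the delicate part''---but the suggested fixes (bootstrapping from \eqref{e:clacon} at a coarser scale, or an inductive argument) are not carried out and face a circularity risk, since the tilt grows with $\theta$ faster than \eqref{e:clacon} controls for the range of $\theta$ relevant to the Chernoff bound you ultimately want. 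The hypercontractivity route is precisely what allows the paper to avoid this issue, and in its absence your plan is incomplete at exactly the step where the $1/\log T$ improvement must be produced.
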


Before giving the proof, let us explain how it implies Proposition \ref{p:concen}:

\begin{proof}[Proof of Proposition \ref{p:concen}]
Since $S_{T,\varepsilon}$ is non-decreasing as $\varepsilon \to 0$ and converges almost surely to $S_T = \sup_{t \in [0, T] } g(t)$ (recall that $g$ is continuous), by monotone convergence we have
\begin{equation}
\label{e:concen3}
  \textrm{Var}[e^{\theta S_T}] \le  c \theta^2 /(\log T) \times   \mathbb{E}[e^{2\theta S_T}] , 
  \end{equation}
for every $T \ge 2$ and $\theta \in \mathbb{R}$. By a concentration result valid for arbitrary random variables (see \cite[Lemma 6]{tan15}), \eqref{e:concen3} implies the concentration bound
\begin{equation}
\label{e:concen4}
\mathbb{P} \Big( | S_T - \mathbb{E}[S_T] |  > x  \Big) \le   c_1 e^{- c_2 x \sqrt{\log T} }  .
\end{equation}
for some $c_1, c_2 \ge 0$ and every $T \ge 2$ and $x \ge 0$. Finally observe that \eqref{e:scaling} and \eqref{e:concen4} imply that
\[  |\mathbb{E}[S_T]- \sqrt{2 \log T}| = O(1/\sqrt{\log T})  \]
and so, up to adjusting constants, we can replace $\mathbb{E}[S_T]$ in \eqref{e:concen4} with $\sqrt{2 \log T}$.
\end{proof}

In order to prove Proposition \ref{p:concen2} we use the hypercontractivity argument of \cite{tan15} (itself based on an argument of Chatterjee \cite{cha14}), except that we modify some details to allow us to lift the assumption that $K(x)$ is non-increasing. This is similar to arguments that appeared in \cite{mrv20} in a related setting.

\begin{proof}[Proof of Proposition \ref{p:concen2}]
To ease notation let us fix $T \ge 2$ and $\varepsilon \in (0, 1)$ and abbreviate $S = S_{T; \varepsilon}$. In the proof we will also identify $g$ with its restriction to $[0, T] \cap (\varepsilon \mathbb{Z})$. Let $\tilde{g}$ be an independent copy of $g$, and for each $s \ge 0$ let
\begin{equation}
\label{e:inter}
g_s(\cdot) = e^{-s} g(\cdot) + \sqrt{1 - e^{-2s} } \tilde{g}(\cdot) ; 
\end{equation}
this defines an interpolation from $g$ to $\tilde{g}$ along the Ornstein-Uhlenbeck semigroup \cite{cha14}. Then via a classical interpolation argument for the variance of functions of centred Gaussian vectors (see \cite{tan15,cha14}), one has the exact formula
\begin{equation}
\label{e:var}
    \textrm{Var}[e^{\theta S}] =  \theta^2 \sum_{ x,y \in   [0, T] \cap (\varepsilon \mathbb{Z})} K(x-y) \int_0^\infty  e^{-s}  \mathbb{E}\Big[  e^{\theta S }e^{\theta S_s  } \id_{I = x, I_s = y}  \Big]  ds  
    \end{equation}
where $I = \{x \in [0, T] \cap (\varepsilon \mathbb{Z}) : g(x) = S\}$ denotes the index of the maximum of $g$ (recall that we identify $g$ with its restriction to $[0, T] \cap (\varepsilon \mathbb{Z})$), and $S_s$ and $I_s$ are defined analogously to $S$ and $I$ with $g_s$ replacing $g$.

Now let $\gamma \in (0, 1)$ be a constant (not depending on $T, \varepsilon$ or $\theta$) whose value we will fix later. For $i \in \{0, 1,\ldots , \lceil T^{1-\gamma}  \rceil\}$ define $B_i = [i T^\gamma, (i+1) T^\gamma) \cap ([0, T] \cap (\varepsilon \mathbb{Z}) )$. Then we can rewrite \eqref{e:var} as
\begin{align}
\label{e:var2}
&    \theta^2 \sum_{i, j : |i-j| \le 1} \sum_{ x \in B_i, y \in B_j}  K(x-y) \int_0^\infty  e^{-s}  \mathbb{E}\Big[  e^{\theta S }e^{\theta S_s  } \id_{I = x, I_s = y}  \Big]  ds  \\
\nonumber   & \qquad +  \theta^2  \sum_{i , j : |i-j| \ge 2}  \sum_{ x \in B_i, y \in B_j} K(x-y) \int_0^\infty  e^{-s}  \mathbb{E}\Big[  e^{\theta S }e^{\theta S_s  } \id_{I = x, I_s = y}  \Big]  ds   .
\end{align}
To deal with the second term in \eqref{e:var2} we simply observe that if $|i-j| \ge 2$, $x \in B_i$ and $y \in B_j$, then $|x-y| \ge  T^\gamma$ and so $K(x-y) \le c_1 /\log T$ for some $c_1 > 0$ (which depends on $\gamma$).  Along with the Cauchy-Schwarz inequality and the equality in law of $g$ and $g_s$, we deduce that 
\begin{equation}
\label{e:var3}  \textrm{second term in \eqref{e:var2}} \le c_1  \theta^2 / (\log T)  \times \sup_{s \ge 0}  \mathbb{E}\big[  e^{\theta S} e^{\theta S_s}    \big] \le c_1  \theta^2 / (\log T) \times \mathbb{E}\big[  e^{2\theta S} ]   .
\end{equation}
To handle the first term in \eqref{e:var2} we instead use $K(x-y) \le K(0) = 1$ to arrive at the bound
\[  \textrm{first term in \eqref{e:var2}} \le \theta^2 \sum_{i}   \int_0^\infty  e^{-s}  \mathbb{E}\big[  e^{\theta S }e^{\theta S_s  } \id_{I, I_s \in B_i^+  }  \big]  ds   \]
where $ B_i^+  = \cup_{|j-i| \le 1} B_j$. We next exploit the hypercontractivity of the Ornstein-Uhlenbeck semigroup (see \cite{cha14}): for any function $F: g \to \mathbb{R}$ and any $p, q \ge 1$ and $s\ge 0$ such that $e^{2s} \ge (q-1)/(p-1)$,
\[ \mathbb{E}[ | \mathbb{E}[ F(g_s) | \mathcal{F}  ] |^q  ]^{1/q}  \le   \mathbb{E}[ |F(g)|^p ]^{1/p} , \]
where $\mathcal{F}$ denotes the $\sigma$-algebra generated by $g$. Define $p(s) = 1 + e^{-s} \in (1,2]$ and its H\"{o}lder complement $q(s) = 1+e^s \in [2, \infty)$, and note that $e^{2s} = (q(s) -1)/(p(s)-1)$ and $(2-p(s))/p(s) = \textrm{tahn}(s/2)$. Recalling the equality in law of $g$ and $g_s$, applying first H\"{o}lder's inequality (with $p' = p(s)$ and $q' = q(s)$), then hypercontractivity (to the function $F = e^{\theta S } \id_{I  \in B_i^+}$, still  with $p' = p(s)$ and $q' = q(s)$), then again H\"{o}lder's inequality (with $p' = 2/p(s)$ and $q' = 2/(2-p(s))$), we have
\begin{align*}
 \mathbb{E} \big[  e^{\theta S }e^{\theta S_s  } \id_{I, I_s \in B_i^+  }  \big]  &   \le  \mathbb{E} \big[ \big(  e^{\theta S } \id_{I  \in B_i^+  } \big)^{p(s)} \big]^{1/p(s)}  \times  \mathbb{E} \big[ \mathbb{E}[ e^{\theta S_s } \id_{I_s  \in B_i^+  } | \mathcal{F} ]^{q(s)} \big]^{1/q(s)}  \\
 & \le  \mathbb{E} \big[ \big(  e^{\theta S } \id_{I  \in B_i^+  } \big)^{p(s)} \big]^{2/p(s)}  \\
 & \le   \mathbb{P}(I  \in B_i^+ )^{\textrm{tahn}(s/2)}   \mathbb{E} \big[  e^{2\theta S } \id_{I  \in B_i^+  }    \big]   . 
 \end{align*}
Since one can check that
\[ \int_0^\infty e^{-s} \alpha^{\textrm{tahn}(s/2) }  ds \le 2/|\log \alpha| \]
for every $\alpha \in (0,1)$, by the union bound $  \mathbb{P}(I  \in B_i^+ ) \le   |B_i^+| \sup_{ y \in \mathbb{R}}     \mathbb{P}(I  \in [y,y+1] )  $ we see that
\begin{equation}
\label{e:var4}
  \textrm{first term in \eqref{e:var2}} \le  2   \theta^2  \times \frac{1}{ | \log (3 T^\gamma) +  \sup_{ y \in \mathbb{R}}     \log \mathbb{P}(I  \in [y,y+1] )   |  } \times \mathbb{E}\big[  e^{2\theta S} ]  
 \end{equation}
 as long as $\log (3 T^\gamma) -  \sup_{ y \in \mathbb{R}}   \log  \mathbb{P}(I  \in [y,y+1] )   < 0$.

It remains to establish the `polynomial delocalisation' of the maximiser
\begin{equation}
\label{e:var5}
 \sup_{y \in \mathbb{R}} \mathbb{P}(I \in [y,y+1] )   \le c_2 T^{-c_3} 
 \end{equation}
 for some $c_2, c_3 > 0$ and all $T \ge 2$ and $\varepsilon \in (0, 1)$, since then we can choose $\gamma \in (0, c_3)$ and combine \eqref{e:var3} and \eqref{e:var4} to deduce the result. Before proving \eqref{e:var5}, remark that heuristically it should follow from the fact that $S$ is likely to be of order $\sqrt{\log T}$ whereas $\sup_{x \in [y,y+1]} g(x)$ is of unit order. So let us first establish the lower bound 
 \begin{equation}
 \label{e:var6}
  \mathbb{E}[S] \ge c_4 \sqrt{\log T} 
  \end{equation}
  for some $c_4 > 0$ and all $T \ge 2$ and $\varepsilon \in (0,1)$. By taking $T$ sufficiently large and extracting a thinned subset of the indices in $[0, T] \cap(\varepsilon \mathbb{R})$ we see that $S$ dominates the maximum of a centred normalised Gaussian vector $X = (X_i)_{1 \le i \le n}$ where $n \ge c_5 T$, $\mathbb{E}[X_i^2]=1$ and $\mathbb{E}[(X_i  - X_j)^2] \ge \delta$ for some $c_5, \delta > 0$ and every $i \neq j$. Hence by the Sudakov-Fernique inequaity \cite[Theorem 2.9]{a90}
  \[  \mathbb{E}[S] \ge \mathbb{E}[\max_i X_i] \ge  \sqrt{\delta} \, \mathbb{E}[  \max_i Y_i ] \]
  where $Y = (Y_i)_{1 \le i \le n}$ are i.i.d.\ standard Gaussians, and \eqref{e:var6} follows. Now let $m = \mathbb{E}[\sup_{x \in [0, 1]} g(x) ] \in (0, \infty)$. Then for any $y \in \mathbb{R}$, $\mathbb{P}(I \in [y,y+1])$ is bounded above by
 \begin{align*}
 \mathbb{P} \big( \sup_{x \in [y, y+ 1]} g(x) \ge S \big) & \le \mathbb{P} \big( \sup_{x \in [0,1] } g(x) \ge   \mathbb{E}[S]/2  \big) +  \mathbb{P}( S \le  \mathbb{E}[S]/2 ) \\
  &  \le  2 e^{- ( \mathbb{E}[S]/2 - m)^2/2} + 2 e^{- \mathbb{E}[S]^2 / 8}    \le c_2 T^{-c_3} 
  \end{align*}
 where we used classical concentration of the supremum \eqref{e:clacon} and the lower bound \eqref{e:var6}. Hence we have established \eqref{e:var5}, which completes the proof.
\end{proof}


\bigskip

\bibliographystyle{plain}
\bibliography{biblio}

\begin{thebibliography}{10}

\bibitem{a90}
R.~J. Adler.
\newblock {\em An introduction to continuity, extrema, and related topics for
  general {G}aussian processes}, volume~12.
\newblock Institute of Mathematical Sciences, Lecture Notes -- Monograph
  Series, 1990.

\bibitem{al96}
K.S. Alexander.
\newblock Boundedness of level lines for two-dimensional random fields.
\newblock {\em Ann. Probab.}, 24(4):1653--1674, 1996.

\bibitem{bg17b}
V.~Beffara and D.~Gayet.
\newblock Percolation without {FKG}.
\newblock {\em arXiv preprint arXiv:1710.10644}, 2017.

\bibitem{cha14}
S.~Chatterjee.
\newblock {\em Superconcentration and related topics}.
\newblock Springer, 2014.

\bibitem{dm01}
R.~C. Dalang and T.~Mountford.
\newblock Jordan curves in the level sets of additive {B}rownian motion.
\newblock {\em Trans. Amer. Math. Soc.}, 353:3531--3545, 2001.

\bibitem{drrv21}
H.~Duminil-Copin, A.~Rivera, P.-F. Rodriguez, and H.~Vanneuville.
\newblock Existence of unbounded nodal hypersurface for smooth {G}aussian
  fields in dimension $d \ge 3$.
\newblock {\em arXiv preprint arXiv:2108.08008}, 2021.

\bibitem{dgr12}
N.~Durrande, D.~Ginsbourger, and O.~Roustant.
\newblock Additive covariance kernels for high-dimensional {G}aussian process
  modeling.
\newblock {\em Ann. Fac. Sci. Toulouse Math.}, 21(3):481--499, 2012.

\bibitem{dyk70}
A.M. Dykhne.
\newblock Conductivity of a two-dimensional two-phase system.
\newblock {\em Zh. Eksp. Teor. Fiz.}, 59:110--115, 1970.

\bibitem{gkr88}
A.~Gandolfi, M.~Keane, and L.~Russo.
\newblock On the uniqueness of the infinite occupied cluster in dependent
  two-dimensional site percolation.
\newblock {\em Ann. Probab.}, 16(3):1147--1157, 1988.

\bibitem{gv21}
C.~Garban and H.~Vanneuville.
\newblock Bargmann-{F}ock percolation is noise sensitive.
\newblock {\em Electron. J. Probab.}, 25:1--20, 2021.

\bibitem{gri99}
G.R. Grimmett.
\newblock {\em Percolation}.
\newblock Springer, 1999.

\bibitem{ha60}
T.E. Harris.
\newblock A lower bound for the critical probability in a certain percolation
  process.
\newblock {\em Proc. Camb. Phil. Soc.}, 56:13--20, 1960.

\bibitem{is92}
M.B. Isichenko.
\newblock Percolation, statistical topography, and transport in random media.
\newblock {\em Rev. Mod. Pys.}, 64(4):961--1043, 1992.

\bibitem{kes80}
H.~Kesten.
\newblock The critical probability of bond percolation on the square lattice
  equals {$1/2$}.
\newblock {\em Commun. Math. Phys.}, 74:41--59, 1980.

\bibitem{llr83}
M.R. Leadbetter, G.~Lindgren, and H.~Rootz\'{e}n.
\newblock {\em Extremes and Related Properties of Random Sequences and
  Processes}.
\newblock Springer-Verlag, New York, 1983.

\bibitem{ms83a}
S.A. Molchanov and A.K. Stepanov.
\newblock Percolation in random fields. {I}.
\newblock {\em Theor. Math. Phys.}, 55(2):478--484, 1983.

\bibitem{ms83b}
S.A. Molchanov and A.K. Stepanov.
\newblock Percolation in random fields. {II}.
\newblock {\em Theor. Math. Phys.}, 55(3):592--599, 1983.

\bibitem{mrv20}
S.~Muirhead, A.~Rivera, and H.~Vanneuville (with an appendix~by
  L.~K\"{o}hler-Schindler).
\newblock The phase transition for planar {G}aussian percolation models without
  {FKG}.
\newblock {\em arXiv preprint arXiv:2010.11770}, 2020.

\bibitem{mv20}
S.~Muirhead and H.~Vanneuville.
\newblock The sharp phase transition for level set percolation of smooth planar
  {G}aussian fields.
\newblock {\em Ann. I. Henri Poincar\'e Probab. Stat.}, 56(2):1358--1390, 2020.

\bibitem{pet08}
G.~Pete.
\newblock Corner percolation on $\mathbb{Z}^2$ and the square root of $17$.
\newblock {\em Ann. Probab.}, 36(5):1711--1747, 2008.

\bibitem{riv21}
A.~Rivera.
\newblock Talagrand's inequality in planar {G}aussian field percolation.
\newblock {\em Electron. J. Probab.}, 26:1--25, 2021.

\bibitem{rv20}
A.~Rivera and H.~Vanneuville.
\newblock The critical threshold for {B}argmann-{F}ock percolation.
\newblock {\em Ann. Henri Lebesgue}, 3:169--215, 2020.

\bibitem{tan15}
K.~Tanguy.
\newblock Some superconcentration inequalities for extrema of stationary
  {G}aussian processes.
\newblock {\em Prob. Stat. Lett.}, 106:239--246, 2015.

\bibitem{zs71}
R.~Zallen and H.~Scher.
\newblock Percolation on a continuum and the localization-delocalization
  transition in amorphous semiconductors.
\newblock {\em Phys. Rev. B.}, 4:4471--4479, 1971.

\end{thebibliography}

\end{document}